\newtheorem{theorem}{Theorem}
\newtheorem{definition}{Definition}
\newtheorem{lemma}{Lemma}
\newtheorem{proposition}{Proposition}
\theoremstyle{definition}
\newtheorem{remark}{Remark}
\newcommand{\beq}{\begin{equation}}
\newcommand{\eeq}{\end{equation}}
\newcommand{\R}{{\mathbb R}}
\newcommand{\pd}{\partial}
\def \c#1{$^{\ref{#1}}$}
\def \r#1{\refstepcounter{prova}\label{#1}
$^{\ref{#1}}$} 
\newcounter{prova}
\title{Gravitational and Harmonic Oscillator Potentials on Surfaces of Revolution}
\date{}
\begin{document}
\maketitle
\begin{center}
\author{Manuele Santoprete}
\end{center}
\begin{center}
{\footnotesize Department of Mathematics\\
Wilfrid Laurier University\\
75 University Avenue West,\\
Waterloo, ON, Canada, N2L 3C5.\\
msantopr@wlu.ca\\
}\end{center}
\begin{abstract}
In this paper we consider the motion of a particle on a surface of revolution
under the influence of a central force field. 
We prove that there are at most two analytic central potentials for which all the bounded, non-singular  orbits are closed and that  there are exactly two on some surfaces with constant Gaussian curvature. The two potentials leading to closed orbits are suitable generalizations of the gravitational and harmonic oscillator potential. We also show that there could be  surfaces admitting only one potential that leads to closed orbits.  In this case the potential is  a generalized harmonic oscillator.   
In the special case of surfaces of revolution with constant Gaussian curvature we prove a generalization of the well-known Bertrand Theorem.
\end{abstract}
\vskip 0.5truecm
\hspace{25pt}  PACS(2006): 45.05.+x, 45.50.Pk

\section*{\large\bf I. Introduction}

The problem of describing the  motion of a particle on surfaces of constant curvature, under the influence of a central potential, is an interesting  problem that dates back to  the 19th century. 
 Lobachevski\c{Lobachevski} was probably the first to propose an analogue of the gravitational force of Newton for the Hyperbolic space $H^3$.
In 1860 Serret\c{Serret} generalized the gravitational force to the sphere and solved the Kepler problem on $S^2$.
In 1870 Schering\c{Schering} wrote an analytical expression for the Newtonian potential on $H^3$.
Only three years later Lipschitz\c{Lipschitz} considered a one body motion in a central potential on the two-sphere $S^2$.
In 1885 Killing\c{Killing} found a generalization of all three Kepler's laws to the case of a three-sphere $S^3$. 

The extension of these results to the hyperbolic case was carried out by Liebmann in 1902.\c{Liebmann1902} He also derived generalizations of the oscillator potential
for $S^3$ and $H^3$. 

The well known Bertrand theorem (that states that there are only two analytic central potentials in Euclidean space for which all the bounded orbits are closed) was generalized to the spaces $S^2$ and $H^2$ by Liebmann in 1903.\c{Liebmann}

Many of these classical results have been long forgotten (see Ref. \ref{Shchepetilov} for more details).
However since then many  authors have studied the classical Kepler problem and the quantum analogue (the hydrogen atom) rediscovering the old results and introducing new elegant ones (see Ref. \ref{Carinena} for some interesting results and for an extensive bibliography on the subject). 

New interest on the topic was generated, at least in part,  because of cosmological models as the 
mixmaster universe\c{Misner} where the spatial slices are positively curved and are topologically three-spheres $S^3$.

In this paper we study the  motion of a particle on surfaces of revolution, under the influence of a central potential. This is a generalization of the analogous problem on surfaces of constant curvature.

We first  generalize the  gravitational potential to surfaces of revolution in two different ways. The first method is  viewing the gravitational potential as a solution of the Laplace-Beltrami equation. The second one is a generalization of an approach of Appell\c{Appell} (see also Ref. \ref{Albouy} and \ref{Borisov}). In this case we define the gravitational potential and the harmonic oscillator potential on surfaces of revolution relating them to the planar case. 

 The potential of the gravitational interaction and the harmonic oscillator on the plane have a peculiar property: they are the only  potentials that have the Bertrand property, i.e. that  generate a central field where all the bounded non-singular  orbits are closed. 
Note, however, that there are non-potential forces all of whose bounded orbits are closed. See for example Ref. \ref{Whittaker} pages 79, 80.
 It is therefore natural to ask weather or not the gravitational potential on surfaces of revolution lead to closed orbits. We show that, in general this is not the case. Indeed such potentials lead to bounded orbits only on certain surfaces of revolution with constant Gaussian curvature.  

Another of the main results of the paper is the proof of Bertrand's theorem for surfaces of revolutions with constant Gaussian curvature: we show that  on certain surfaces the only potentials for which all the bounded non-singular orbits are closed are the  generalization of the gravitational and the harmonic potential.   This result generalizes the proof of Liebmann\c{Liebmann}  (that holds in the case of the sphere $S^2$ and the hyperbolic plane $H^2$) and Kozlov and Harin\c{Kozlov} (that holds in the case of the sphere). 
Note that, while in the case of the Euclidean plane the two-sphere and the hyperbolic plane all the bounded orbits close after one ``loop", this is not true in general for surfaces of revolution with constant Gaussian curvature. Indeed in the latter case a non-circular orbit will close after $n$ loops (where $n$ is an integer that depends on the surface).   


Finally we prove that, for a general surface of revolution, there are at most two central potentials that lead to bounded closed orbits and there are exactly two on some surfaces of constant Gaussian curvature, in which case  the potentials are the generalization of the gravitational and the harmonic ones. It is worth noticing that on certain surfaces (e.g. the torus) there are no potentials leading to closed orbits. 
We also show that there could be surfaces of revolution where there is only one potential leading to closed orbits and such potential is the generalized harmonic oscillator.  
We were unable to find any explicit example of this last kind of surfaces. 

The proofs use a suitable generalization of a proof of the  classical Bertrand's theorem due to Tikochinsky.\c{Tikochinsky} However, we were unable to obtain a proof based on Arnol'd's treatment of Bertrand's theorem (see Ref. \ref{Arnold}, Section 2.8D).  The basic idea is the following: first we treat  circular orbits of radius $u_0$. These are shown to exist for potentials defined on the surfaces under consideration. Next we derive a condition for closed orbits. Then, we consider  small deviations from $u_0$ and, using the condition above, we expand the effective potential to the first non vanishing order. This leads to a first condition that is expressed in the form of a 
differential equation. Finally, we  use the next two orders in the expansion of the 
effective potential and find a further condition for closed orbits. The two conditions are then analyzed and used to obtain the main results in the paper.  

The paper is organized as follows. In the next section  we write the equations of motion of a central potential on a surface of revolution. In Sec. III, we define the gravitational potential and the harmonic oscillator potential on a surface of revolution. 
In Sec. IV we find an expression for the Gaussian curvature of a surface of revolution and we prove several facts important in the case of constant curvature. 
In Sec. V, we
write the equations of the trajectory on a surface of revolution and we show that the gravitational potential and the harmonic potential lead to closed orbits for certain surfaces of constant Gaussian curvature. 
In the last section we state and prove the main results of the paper. 
\section*{\large\bf II. Equations of motion}
Let $I$ be an interval of real numbers then we say that  $\gamma:I\rightarrow \R^2$ is a regular plane curve if $\gamma$ is $C^1$  and
$\gamma'(x)\neq 0$ for any $x\in I$.

\begin{definition}Let $\gamma: I\rightarrow \R^3$ be a simple (no self intersections) regular plane curve $\gamma(u)=((f(u),0,g(u))$ on the $xz$-plane where $f$ and $g$ are smooth curves on the interval $I$, with $f(u)>0$ in the interior of $I$. 
Let $S$ be a surface isometrically  embedded in $\R^3$ that admits a parametrization ${\bf x}:I\times \R\rightarrow S$ of the form
\beq{\bf x}(u,\phi)=(f(u)\cos\phi,f(u)\sin\phi,g(u))
\label{eqparam}\eeq
then:
\begin{compactenum}
\setlength{\itemsep}{0.1cm} 
\item if $I=[c,d]$ and $f(c)=f(d)=0$, $S$ is a {\it spherical surface of revolution}
\item if $I=(c,d)$, with $-\infty\leq c<d\leq \infty$, $S$ is a {\it hyperboloidal surface of revolution}
\item if $I=[c,d]$ and $\gamma(c)=\gamma(d)$ with $f(c)=f(d)>0$,  $\gamma$ is a closed loop and $S$ is a {\it toroidal surface of revolution}.
\item if $I=[c,d)$, with $\infty <c<d\leq \infty$ and $f(c)=0$, then $S$ is a {\it paraboloidal surface of revolution}
\end{compactenum}
In all cases $S$ is a surface of revolution obtained by rotating $\gamma$ about the $z-$axis. The curve $\gamma$ will be called the profile curve.

\end{definition}
Note that a spherical surface of revolution is isomorphic to $S^2$ and that by definition the sets ${\bf x }(c,\phi)$ and ${\bf x}(d,\phi)$ reduce to  single points, i.e. the {\it north} and the {\it south poles} of S. Similarly hyperboloidal, toroidal and paraboloidal surfaces of revolution are homeomorphic to a hyperboloid of one sheet, a torus ($S^1\times S^1$) and an elliptic  paraboloid, respectively. 
Metric singularities can occur only on spherical and paraboloidal surfaces of revolution.
If $S$ is a spherical surface of revolution metric singularities can only occur at the north and south poles, S is smooth everywhere else.
If $S$ is a paraboloidal surface of revolution metric singularities can occur only at $u=c$.
Hyperboloidal and toroidal surfaces of revolutions do not have metric singularities and are smooth.
 
Throughout this paper all surfaces of revolution will be assumed to be  as in Definition 1 (i.e. they will be either spherical, hyperboloidal, toroidal or paraboloidal) and the profile curve $\gamma$ is assumed to be unit speed, i.e. $ (\frac{df}{du})^2+ (\frac{dg}{du})^2=1$.

For a surface of revolution $S$, a simple computation gives the coefficients of the first fundamental form, or metric tensor (subscripts denote partial derivatives): 
\[E={\bf x}_u \cdot {\bf x}_u=\left(\frac{df}{du}\right)^2+ \left(\frac{dg}{du}\right)^2=1, \quad F={\bf x}_u \cdot {\bf x}_\phi=0 \quad G={\bf x}_\phi \cdot {\bf x}_\phi=f(u)^2,\]
 so that the metric (away from any singular point) is 
\beq
ds^2= E~du^2+2F~du~d\phi+G~d\phi^2=  du^2+f(u)^2d\phi^2.
\label{eqds}\eeq
Note that the parametrization is orthogonal ($F=0$) and that $E_\phi=G_\phi=0$.
Surfaces given by parametrizations with these properties are said to be {\it $u$-Clairaut}.
The Lagrangian function of a particle of mass $m$ moving on the surface takes the form
\[
L=\frac m 2 ( \dot u^2+f(u)^2\dot \phi^2) -V(u,\phi)
\]
where $V(u,\phi)$ is the potential energy.  We now consider the case where $V$ is a function of $u$ alone, i.e. it is a {\it central potential}. Furthermore we assume that $V$ is analytic except, at most, at the points where $f(u)=0$, where the function is allowed to have a singularity. 

The Hamiltonian is 
\[
H=\frac{p_u^2}{2m}+\frac{p_\phi^2}{2m f(u)^2}+V(u)
\]
where $p_\phi=mf(u)^2\dot\phi$.

{\bf Examples:}
Motion on the plane: take $f(u)=u$, $g(u)=0$ with $u\in (0,\infty)$.
In this case one recovers the usual central force problem.    

Motion on the sphere: take $f(u)=\sin(u)$, $g(u)=\cos(u)$ with $u\in [0,\pi]$.

Equations of motion:
\[\left\{\begin{array}{l}
\dot u=\frac{\partial H}{\partial p_u}=\frac{p_u}{m}\\
\dot\phi=\frac{\partial H}{\partial p_\phi}=\frac{p_\phi}{m f(u)^2}\\
\dot p_u=-\frac{\partial H}{\partial u}=\frac{p_\phi^2f'(u)}{mf(u)^3}    -\frac{dV}{du} \\
\dot p_\phi=-\frac{\partial H}{\partial \phi}=0
\end{array}\right.
\]
Clearly $H$ and $p_\phi$ are constant of motions, they are in involution and the problem is integrable by  the Liouville-Arnold theorem. 

Since $V:(c,d)\rightarrow \mathbb{R}$ is real analytic, standard results of differential equation theory guarantee, for any initial data $(u(0),\phi(0),p_u(0),p_\phi(0))$ the existence and uniqueness of an analytic solution defined on a maximal interval $[0,t^*)$, where $0<t^*\leq \infty$. If  if $t^*<\infty$, we say the solution  is {\it singular}.
If the potential is singular at $u=c$ and/or $u=d$ this singularity induces singularities in the solution. If $u(t)\rightarrow {c}$ and/or $u(t)\rightarrow {d}$  as $t \rightarrow t^*$ we say that the solution experience a {\it collision}.  
It can be shown that, in the problem under discussion, there are two types of singularities: collisions, and the singularities that arise when a solution reach the boundary of the surface of revolution in a finite time. 

\section*{\large\bf III. Gravitational and Harmonic potential for surfaces of revolution}
In this section we generalize the gravitational and the harmonic oscillator potential to general surfaces of revolution. We present two different ways to do so. 
The first one starts from the observation that the gravitational potential is a solution of the Laplace equation. It is then natural to define the gravitational potential on a surface of revolution as a solution of the Laplace-Beltrami equation. 
The second is based upon the work of Appell\c{Appell} (see also Ref. \ref{Albouy} and \ref{Borisov}) that used the central  projection (or in cartographer's jargon the gnomonic projection) to relate the motion on the plane to the motion on a sphere.
\subsection*{\large\bf A. Laplace-Beltrami Equation}

The Laplace-Beltrami equation generalizes the Laplace equation to arbitrary surfaces.
For a function $V$ depending only on $u$, if the element of length is given by equation 
(\ref{eqds}), the Laplace-Beltrami equation takes the form 
\beq
\triangle V(u)=\frac{1}{f(u)^2}\frac{\partial}{\partial u}\left(f(u)^2\frac{\partial V(u)}{\partial u}\right)=0
\label{eqlaplace}\eeq

The solution of the Laplace-Beltrami equation  is 
\beq
V_1(u)=a \Theta(u)
\label{eqV}
\eeq
where $a$ is a  constant and $\Theta(u)$ is an antiderivative of $1/f(u)^2$. 
To be more definite let us assume $a>0$. The parameter $a$ plays the role of the gravitational constant. This generalizes the gravitational potential to surfaces of revolution. The analogue of the harmonic oscillator potential instead  is given by

\beq
V_2(u)=k \Theta(u)^{-2}.
\label{eqoscillator}
\eeq

\subsection*{\large \bf B. Central Projection}
Following Serret  \c{Serret} Appell\c{Appell} consider a system in $\R^2$ with the following equations of motion (in polar coordinates):
\beq
\frac{d}{d\tau}\left(\frac{\pd T_p}{\pd(dr/d\tau)}\right)=R, \quad \frac{d}{d\tau}\left(\frac{\pd T_p}{\pd (d\psi/d\tau)}\right)=\Psi\label{eqplane}
\eeq
where $T_p$ is the kinetic energy of a point mass (of mass $m=1$) in the plane 
\[
T_p=\frac 1 2 \left(\left(\frac{dr}{d\tau}\right)^2+r^2\left(\frac{d\psi}{d\tau}\right)^2\right)
\]
while $R,\Psi$ stand for certain generalized forces.

Similarly let $T_s$ be the kinetic energy of a point mass on the surface of revolution $S$.
\[
T_s=\frac 1 2 ( \dot u^2+f(u)^2 \dot\phi^2).
\]
 The equations of motion are
\beq
\frac{d}{d\tau}\left(\frac{\pd T_s}{\pd \dot u}\right)={\mathcal U}, \quad \frac{d}{d\tau}\left(\frac{\pd T_s}{\pd \dot\phi}\right)=\Phi \label{eqsurface}
\eeq
Consider the transformation of coordinates and time given by
\beq
r=X(u)=-\Theta(u)^{-1},\quad \phi=\psi,\quad d\tau=Y(u)dt=(f(u)\Theta(u))^{-2}dt.
\eeq
Then the equations (\ref{eqplane}) take the form of equations (\ref{eqsurface}) where 
\[{\mathcal U }=Y(u)R,\quad \Psi=Y(u)\Phi.\]
Now we can prove the following
\begin{theorem}
There exists a trajectory isomorphism  between the Lagrangian system on $\R^2$ with central potential 
\beq
L_p=\frac 1 2 \left(\left(\frac{dr}{d\tau}\right)^2+r^2\left(\frac{d\psi}{d\tau}\right)^2\right)+V(r)
\eeq
and the Lagrangian system on the surface of revolution $S$, given by
\beq
L_s=\frac 1 2 ( \dot u^2+f(u)^2 \dot\phi^2)+V(-\Theta(u)^{-1})
\eeq
\end{theorem}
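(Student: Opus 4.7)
The plan is to exploit the change of variables and time reparametrization already introduced before the statement, which turns the planar Euler--Lagrange equations (\ref{eqplane}) into the surface Euler--Lagrange equations (\ref{eqsurface}), and then to check that under this correspondence a conservative planar force $R=-V'(r)$, $\Psi=0$ is converted into precisely the conservative surface force coming from the potential $V(-\Theta(u)^{-1})$. Once this matching of forces is established, the two systems of ODEs have the same solutions modulo the change of coordinates, which is exactly the asserted trajectory isomorphism.

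More concretely, first I would take the planar Lagrangian $L_p$, compute its Euler--Lagrange equations, and read off $R=-V'(r)$, $\Psi=0$. Then, using the identities $r=-\Theta(u)^{-1}$, $\psi=\phi$, $d\tau=Y(u)\,dt$ with $Y(u)=(f(u)\Theta(u))^{-2}$ and $\Theta'(u)=1/f(u)^2$, I would invoke the calculation preceding the theorem (which shows that \eqref{eqplane} becomes \eqref{eqsurface} with $\mathcal{U}=Y(u)R$ and the angular component transforming analogously). So all that remains is to verify that the force $\mathcal{U}$ produced this way is the one derived from the surface potential $W(u):=V(-\Theta(u)^{-1})$.

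This verification is a direct application of the chain rule:
\begin{equation*}
\frac{dW}{du}=V'\bigl(-\Theta(u)^{-1}\bigr)\cdot\Theta(u)^{-2}\cdot\Theta'(u)=V'(r)\cdot\frac{1}{f(u)^2\,\Theta(u)^2}=V'(r)\,Y(u).
\end{equation*}
Hence $-W'(u)=-V'(r)\,Y(u)=Y(u)R=\mathcal{U}$, so the surface equation of motion for $u$ coming from $L_s$ coincides with the transform of the planar equation for $r$ coming from $L_p$. The angular equation is trivial on both sides since the potential is central and $\psi=\phi$ is unchanged. Reading the transformation in reverse gives the converse implication, so solutions correspond bijectively.

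The only genuinely delicate point is making sure that the time reparametrization interacts correctly with the Euler--Lagrange operator: the derivation in the paper treats the equations in force form rather than Lagrangian form precisely because a nonautonomous reparametrization $d\tau=Y(u)\,dt$ is not an invariance of the Lagrangian itself, but only of its critical curves up to the force rescaling ${\mathcal U}=Y(u)R$. I would therefore carry out the argument at the level of equations of motion, as above, rather than trying to transform $L_p$ into $L_s$ directly, which is where a naive approach would get stuck.
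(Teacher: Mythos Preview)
Your proposal is correct and follows essentially the same route as the paper: set $\Psi=\Phi=0$ and verify via the chain rule that the conservative surface force $-\tfrac{d}{du}V(-\Theta(u)^{-1})$ equals $Y(u)R$, which is exactly the content of the paper's (very terse) proof. Your write-up is in fact more careful than the paper's, which contains a slip ($R/Y(u)$ instead of $Y(u)R$) in the final equality.
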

\begin{proof}
Let $\Phi=\Psi=0$
and \[R=-\frac{\pd U}{\pd r},\quad {\mathcal U}=-\frac{\pd V}{\pd u}=-\frac{\pd  V}{\pd r}\frac{\pd r}{\pd u}=\frac{R}{Y(u)}\]
\end{proof}

In particular in the case of the Newtonian potential, then  
\[R=-\frac{a}{r^2}=-\frac{a}{X(u)}=-a\Theta(u)^2\]
and thus
\[{\mathcal U}=Y(u)R=-\frac{a}{f(u)^2}\]
integrating (and changing the sign) we find the potential 
\[
V_1=a\Theta(u)
\]
that coincides with the solution of the Laplace-Beltrami equation on the surface $S$. It is natural to consider  $V_1$ as the analogue of the gravitational potential. 
Similarly in the case of the harmonic oscillator potential
\[R=-\bar kr=-kX(u)=\bar k\Theta(u)^{-1}\]
and thus
\[
{\mathcal U}=Y(u)R=\frac{\bar k}{f(u)^2\Theta(u)^3}
\]
integrating (and changing the sign)  we find
\[
 V_2=-\bar k\int \frac{ du}{f(u)^2\Theta(u)^3}=\frac{\bar k}{2} \Theta(u)^{-2}=k\Theta(u)^{-2}
\]
where $k=\bar k/2$.
It is natural to consider  $V_2$ as the analogue of the harmonic oscillator potential.
\section*{\large\bf IV. Gaussian Curvature of Surfaces of Revolution}
Let  ${\bf x}(u,\phi)$ be a parametrization of the surface and let 
\[
{\mathcal E}=\frac{({\bf x_u}\wedge {\bf x}_\phi)\cdot {\bf x}_{uu}}{\sqrt{EG-F^2}}, \quad {\mathcal F}=\frac{({\bf x_u}\wedge {\bf x}_\phi)\cdot {\bf x}_{u\phi}}{\sqrt{EG-F^2}}, \quad {\mathcal G}=\frac{({\bf x_u}\wedge {\bf x}_\phi)\cdot {\bf x}_{\phi\phi}}{\sqrt{EG-F^2}}
\]
be the coefficients of the second fundamental form in this parametrization. 
Then the Gaussian curvature is given by the expression 
\beq
K=\frac{{\mathcal E} \mathcal G-\mathcal F^2}{EG-F^2}.
\eeq
For a surface of revolution the parametrization is given by (\ref{eqparam}) and thus $\mathcal E=-fg',~ \mathcal F=0$, and $\mathcal G=f'g''-g''f'$. Consequently the Gaussian curvature is
\[
K=-\frac{g'(g'f''-g''f')}{f}.
\]
It is convenient to put the Gaussian curvature in another form.
By differentiating $(f')^2+(g')^2=1$ we obtain $f'f''=-g'g''$. Thus,
\[
K=-\frac{g'(g'f''-g''f')}{f}=-\frac{(g')^2 f''+(f')^2f'')}{f}=-\frac{f''}{f}
\]

Now we want to study surfaces of revolution with constant curvature $K$.
The requirement of constant curvature gives us a linear differential equation to solve
\[
f''=-K f.
\]
The solutions to this differential equation are of the form
\[
f(u)=A e^{i\sqrt{K}u}+Be^{-i\sqrt{K}u}
\]
if $K\neq 0$
and 
\[
f(u)=Cu+D
\]
if $K=0$. 

Then, substituting $f(u)$ into the unit speed relation $f'(u)^2+g'(u)^2=1$ and solving for $g(u)$ gives
\beq
g(u)=\pm \int_{u_0}^u \sqrt{1-f'(s)^2}~ds
\label{eqg}
\eeq

\begin{remark}
Note that the only surfaces of revolution with zero constant curvature are the right circular cylinder, the right circular cone  and the plane. 
\end{remark}
\begin{remark}
The sphere is obtained when  $K>0$ and $A=-B=\frac{1}{2i}$.
The hyperbolic plane is obtained when $K<0$ and $A=-B=\frac 1 2$. 
\end{remark}

Now we can prove the following

\begin{proposition}
The equation 
\beq-ff''+(f')^2=b^2
\label{eqf1}\eeq 
is verified if and only if the surface of revolution $S$ has constant Gaussian curvature $K$
and either $f(u)=A e^{i\sqrt{K}u}+Be^{-i\sqrt{K}u}$ with $AB=b^2/4K$ or 
$f(u)=Cu+D$ with   $C=\pm b$.
\label{prf0}
\end{proposition}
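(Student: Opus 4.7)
The plan is to prove both implications by reducing the question, via a single differentiation, to the linear ODE $f''=-Kf$ that already characterizes constant Gaussian curvature.

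For the forward direction, I would start from $-ff''+(f')^2=b^2$ and differentiate both sides with respect to $u$. The right-hand side vanishes, while the left-hand side simplifies (after cancellation of $2f'f''$ against $-f'f''$) to $f'f''-ff'''=0$. This is exactly the numerator of $\tfrac{d}{du}(f''/f)$ after multiplying by $f$, so $f''/f$ is constant wherever $f\neq0$; writing this constant as $-K$ we obtain $f''=-Kf$, which by Section IV means $S$ has constant Gaussian curvature $K$. The general solution of this ODE is precisely the one recalled in the excerpt: $f(u)=Ae^{i\sqrt{K}u}+Be^{-i\sqrt{K}u}$ for $K\neq0$ and $f(u)=Cu+D$ for $K=0$. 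All that is left is to feed each solution back into $-ff''+(f')^2=b^2$ and read off the constraint on the constants.

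For the $K\neq0$ case I would compute
\begin{equation*}
(f')^2=-K\bigl(A^2e^{2i\sqrt{K}u}-2AB+B^2e^{-2i\sqrt{K}u}\bigr),\qquad ff''=-Kf^2=-K\bigl(A^2e^{2i\sqrt{K}u}+2AB+B^2e^{-2i\sqrt{K}u}\bigr),
\end{equation*}
so that the oscillatory terms cancel and $-ff''+(f')^2=4KAB$; matching this to $b^2$ yields $AB=b^2/(4K)$. For the $K=0$ case, $f'=C$, $f''=0$, hence $-ff''+(f')^2=C^2=b^2$, giving $C=\pm b$ (with $D$ free). The converse direction is just the same substitution read backwards: plugging either of the two listed families with the stated constraint into $-ff''+(f')^2$ returns $b^2$ identically.

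The only mild subtlety is justifying the step $(f''/f)'=0\Rightarrow f''=-Kf$ globally. Since $f$ is real-analytic and positive on the interior of $I$ (part of the definition of a surface of revolution), the identity $f'f''-ff'''=0$ holds everywhere on $I$, and on the open set where $f\neq0$ we get $f''=-Kf$ with a single constant $K$; analyticity then extends this ODE to all of $I$, including any boundary zeros of $f$ occurring in the spherical or paraboloidal cases. I expect this extension argument, rather than the algebraic substitution, to be the only point worth commenting on in the write-up.
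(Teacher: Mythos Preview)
Your argument is correct and follows essentially the same route as the paper: both proofs show that the equation forces $f''/f$ to be a constant $-K$, then substitute the general solution of $f''=-Kf$ back into the original identity to obtain the constraints on $A,B$ or $C$. The only cosmetic difference is that the paper reaches the constancy of $f''/f$ by computing $\bigl((f')^2-b^2\bigr)/f^2$ and checking that its derivative carries the factor $-ff''+(f')^2-b^2$, whereas you differentiate the equation directly to get $f'f''-ff'''=0$; since $(f')^2-b^2=ff''$ under the hypothesis, these are the same quantity and the two computations are equivalent.
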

\begin{proof}
Note that 
\beq
\left(\frac{(f')^2-b^2}{f^2}\right)'=-2\frac{ff'}{f^4}(-ff''+(f')^2-b^2)
\label{eqcurv}
\eeq

If  $-ff''+(f')^2=b^2$ then from Eq. (\ref{eqcurv}) it follows that
\[
\left(\frac{(f')^2-b^2}{f^2}\right)=-K
\]
for some constant $K$. Consequently, since $-ff''+(f')^2=b^2$, $f''/f=-K$
and the curvature is constant. 

On the other hand assume that $f''=-Kf$. Then, 
if $K\neq 0$, $f=Ae^{i\sqrt{K}u}+Be^{-i\sqrt{K}u}$. Plugging this into  
$-ff''+(f')^2=b^2$ we find the condition $AB=\frac{b^2}{4K}$.
If $K=0$ then $f=Cu+D$. Plugging into the equation we find
$C^2=b^2$.

\end{proof}

\begin{proposition}
The function $f$ satisfies the equation \beq
\frac{f'(u)}{f(u)}=-b^2\Theta(u)
\label{eqint}
\eeq
for some antiderivative $\Theta(u)$ of $1/f(u)^2$, if and only if it satisfies the  nonlinear differential equation
\[
-ff''+(f')^2=b^2
\]\label{prf}
\end{proposition}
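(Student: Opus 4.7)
The proof is essentially a one-line differentiation argument in each direction, so the plan is mostly about unpacking the quantifier ``for some antiderivative $\Theta$'' at the right moment. Let $\Theta$ be any antiderivative of $1/f(u)^2$, so that $\Theta'(u) = 1/f(u)^2$.

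For the forward direction, I would assume $f'/f = -b^2\Theta$ and simply differentiate both sides. The left-hand side gives $(f'/f)' = (f''f - (f')^2)/f^2$, while the right-hand side gives $-b^2\Theta' = -b^2/f^2$. Clearing the common denominator $f^2$ yields $f''f - (f')^2 = -b^2$, i.e.\ $-ff'' + (f')^2 = b^2$, which is the desired equation.

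For the converse, I would start from $-ff'' + (f')^2 = b^2$ and run the same computation backwards: $(f'/f)' = (f''f - (f')^2)/f^2 = -b^2/f^2$. Since $-b^2/f^2 = -b^2\Theta'$ for any antiderivative $\Theta$ of $1/f^2$, integration gives $f'/f = -b^2 \Theta + C$ for some constant $C$. Here is the only point requiring a tiny bit of care: I absorb the constant by replacing $\Theta$ with the antiderivative $\tilde\Theta(u) := \Theta(u) - C/b^2$, which still satisfies $\tilde\Theta' = 1/f^2$. With this choice, $f'/f = -b^2\tilde\Theta$, establishing the equivalence.

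There is no real obstacle in this proof — the identity $(f'/f)' = (f''f-(f')^2)/f^2$ does all the work, and the ``for some antiderivative'' wording is what makes the equivalence clean rather than one-sided. The only thing worth flagging explicitly in the write-up is why the integration constant can be absorbed into $\Theta$; otherwise one would get $f'/f = -b^2\Theta + C$ and the statement would appear to fail.
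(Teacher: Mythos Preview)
Your proof is correct and uses the same key identity as the paper, namely $(f'/f)'=(f''f-(f')^2)/f^2$; the paper's own proof is a one-liner that writes this identity and says ``which implies (\ref{eqint}) for some $\Theta(u)$.'' Your write-up is simply more explicit, doing both directions separately and spelling out how the integration constant is absorbed into the choice of antiderivative --- points the paper leaves implicit.
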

\begin{proof}
\[
\left(\frac{f'}{f}\right)'=\frac{f''f-(f')^{2}}{f^{2}}=-\frac{b^{2}}{f^{2}},
\]
which implies (15) for some $\Theta(u)$.

\end{proof}
\begin{figure}[t]
  \begin{center}
    \mbox{
      \subfigure[]{\resizebox{!}{5.5cm}{\includegraphics{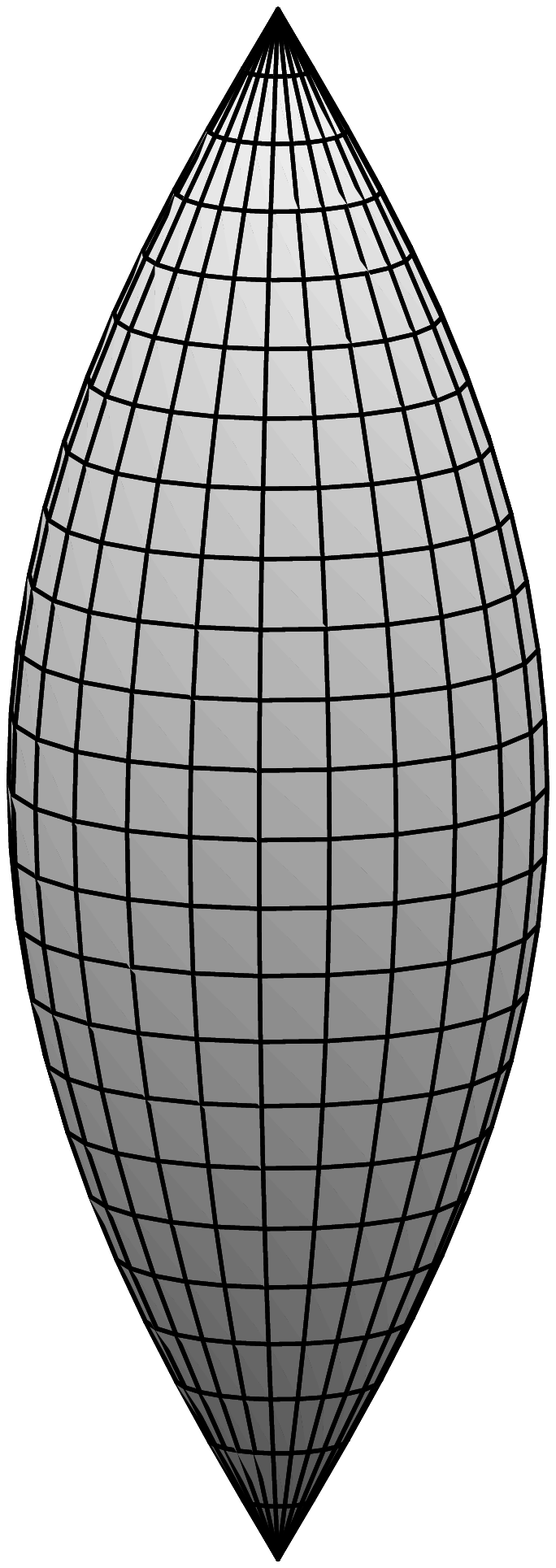}} \label{plotposconstcurv}} \quad\quad
      \subfigure[]{\resizebox{!}{5.5
      cm}{\includegraphics{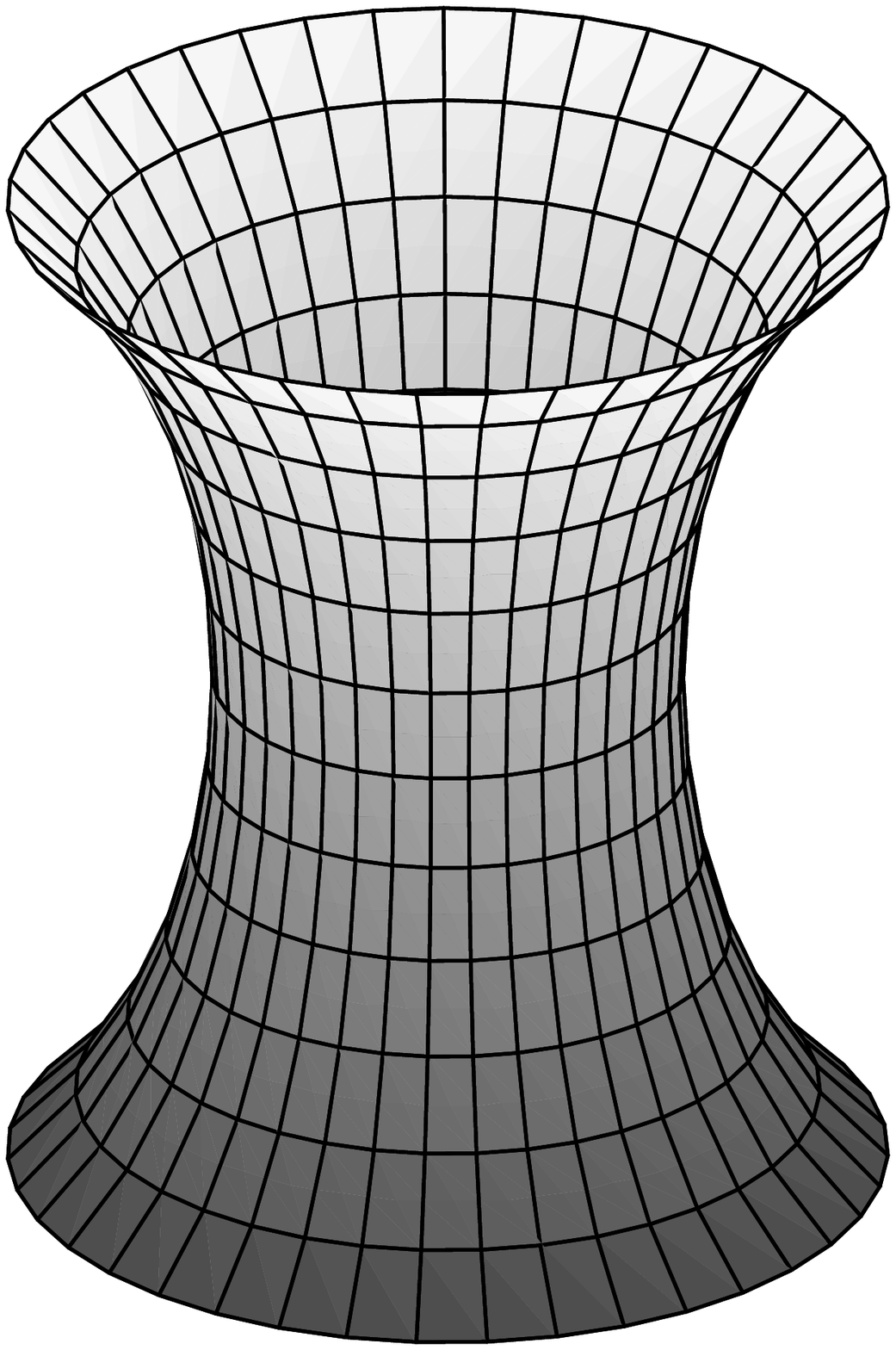}} \label{plotnegconstcurv}}} 
    \caption{(a) A constant $K=1$ surface with  $A=1/2$ and $B=1/8$ (b) A constant $K=-1$ surface with  $A=-1/2$ and $B=1/8$ }
  \end{center}
\end{figure}

\begin{remark}
Note that   there are nontrivial surfaces with constant Gaussian curvature (i.e. beside the Euclidean plane the Hyperbolic plane and the sphere).
A surface  with  Gaussian curvature is $K=1$ and $b=1/2$ where  $A=1/2$ and $B=1/8$ (i.e. with  $f(u)=1/2e^{iu}+1/8e^{-iu}$) is depicted in Fig. \ref{plotposconstcurv}. In this case $f(u)$ satisfies $-ff''+(f')^2=b^2$ with $b=1/2$. 
A surface with Gaussian curvature $K=-1$, $b=1/2$, $A=1/2$ and $B=1/8$ (i.e. with $f(u)=-1/2e^{u}+1/8e^{-u}$) is given in Fig. \ref{plotnegconstcurv}. As in the previous example $f(u)$ satisfies $-ff''+(f')^2=b^2$ with $b=1/2$. 

\end{remark}

\section*{\large\bf V. Equation of the Trajectory}
We now   write the equation of the trajectory.
Let $p_\phi\neq 0$. Then the coordinate $\phi$ varies monotonically and can be used as a new time. 
Let us put 
\[\rho=1/r=-\Theta(u)\]
where $\Theta(u)$ is the antiderivative of $1/f(u)^2$ selected in Proposition \ref{prf}.
This change of variable has a long and distinguished history that goes back to 
A.C. Clairaut's {\it Th\'eorie de la Lune} (1765) and it seems strictly related to  
the various proofs of Bertrand's theorem. For instance the proofs in Refs. \ref{Arnold},\ref{Goldstein} and the original proof of Bertrand 
use the change of variable above. 
 
Since $p_\phi=mf(u)^2\dot \phi$ it is clear that
\[
\dot \rho=-\frac{\dot u}{f(u)^2}, \quad \frac{d\rho}{d\phi}=-\frac{m\dot u}{p_\phi},\quad \frac{d^2\rho}{d\phi^2}=-\frac{m^2\ddot uf(u)^2}{p_\phi^2}.
\]
Consequently the equation of motion 
\[
\ddot u=\frac{p_\phi^2\frac{df}{du}}{m^2f^3(u)}-\frac 1 m \frac{dV}{du}=0
\]
can be rewritten as 
\beq
\frac{d^2\rho}{d\phi^2}+\frac{df}{du}f(u)^{-1}+\frac{ m}{p_\phi^2}\frac{dV(1/\rho)}{d\rho}=0
\label{eqrho}
\eeq
If $f(u)$ satisfies Eq. (\ref{eqf1}) then by Proposition \ref{prf} we have $\frac{df}{du}f(u)^{-1}=b^2\rho$. 
Consequently we obtain
\beq
\frac{d^2\rho}{d\phi^2}+b^2\rho+\frac{ m}{p_\phi^2}\frac{dV(1/\rho)}{d\rho}=0.
\label{eqrho1}
\eeq
This is the equation of the trajectory. In the case of the the Euclidean plane this is substantially given in Newton's {\itshape Principia}, Book I. \S\S  2 and 3. and in  A.C. Clairaut's {\it Th\'eorie de la Lune} (1765). See also Ref. \ref{Whittaker} for a more accessible reference. 

\subsection*{\large\bf A. The Gravitational Case}
In this section we study the motion under the influence of the potential
\[
V_1=a\Theta(u)=-a\rho.
\]
In this case the equation of the trajectory (\ref{eqrho1}) takes the form \[
\frac{d^2\rho}{d\phi^2}+b^2\rho-\frac{am}{p_\phi^2}=0.
\]
The solution is given by the sum of the solution of the homogeneous equation of the form $\rho=\frac e p \cos[b(\phi-\phi_0)]$ plus a solution of the non-homogeneous equation $\rho=\frac 1 p$.
The solution $\rho=\frac 1 p$ corresponds to the ``circular orbit" of radius 
\[
p=\frac{b^2 p_\phi^2}{am}.
\] 
Consequently the trajectory is given by
\[\rho=\frac 1 p(1+e\cos[b(\phi-\phi_0)]).\]
\subsection*{\large\bf B. The Harmonic Oscillator Case}
In this section we study the motion under the influence of the potential
\[
V_2=\frac{k}{ \Theta^{2}}=\frac{k}{\rho^2}
\]
In this case the equation of the trajectory (\ref{eqrho1}) takes the form\[
\frac{d^2\rho}{d\phi^2}+b^2\rho-\frac{2km}{p_\phi^2\rho^3}=0.
\]
A first integral of the equation above is
\[
h=\frac 1 2 (\frac{d\rho}{d\phi})^2+\frac{km}{p_{\phi}^2\rho^2}+\frac 1 2b^2\rho^2.
\]
Consequently the orbital equation is
\[
\frac{d\rho}{d\phi}=\pm \sqrt{2\left(h-\frac{km}{p_\phi^2\rho^2}-\frac{b^2\rho^2}{2}\right)}
\]
and thus
\[
\phi-\phi_0=\int_{\rho_0}^{\rho(\phi)}\frac{d\rho}{\rho\sqrt{2\left(h\rho^2-\frac{km}{p_\phi^2}-\frac{b^2\rho^4}{2}\right)}}=\frac{1}{2b}\int_{\rho_0}^{\rho(\phi)}\frac{d\rho}{\rho\sqrt{-\left(\rho^2-\frac{h}{b^2}\right)^2+\left(\frac{h^2}{b^4}-\frac{2km}{p_\phi^2b^2}\right)}}.
\]
The substitution $w=\rho^2-\frac{h}{b^2}$ yields
\[\phi-\phi_0=\frac{1}{2b}\int_{w_0}^w\frac{dw}{\sqrt{-w^2+\eta^2}}\]
where $\eta^2=\left(\frac{h^2}{b^4}-\frac{2km}{p_\phi^2b^2}\right)$ and $w_0=\rho_0^2-\frac{h}{b^2}$.
Consequently, choosing $\rho_0^2=\frac{h}{b^2}$, we obtain
\[
\phi-\phi_0=-\frac{1}{2b}\arccos\left(\frac{w}{\eta}\right)
\]
and the equation of the orbit is given by
\[
\rho^2=\frac{h}{b^2}+\eta\cos[2b(\phi-\phi_0)]
\]

\begin{lemma}
All the bounded orbits given by the gravitational and harmonic oscillator potential 
on the surface of revolution $S$ are closed if $-f''+(f')^2=b^2$ where $b$ is a rational number. \label{lemmakephar}
\end{lemma}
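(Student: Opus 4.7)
The plan is to use the explicit orbit equations already derived in Sections V.A and V.B and reduce closedness on $S$ to a commensurability condition between the period of $\rho(\phi)$ and $2\pi$. Since the hypothesis $-ff''+(f')^2=b^2$ is precisely the one under which those formulas are valid (by Proposition \ref{prf}), no further work on the equations of motion is needed; the task is purely to read off the $\phi$-period of $\rho$ in each case and impose commensurability with $2\pi$.

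First, observe that $\rho=-\Theta(u)$ with $\Theta'(u)=1/f(u)^2>0$, so $\Theta$ is strictly monotone and $u(\phi)$ has exactly the same period in $\phi$ as $\rho(\phi)$. An orbit therefore closes on $S$ iff there exist positive integers $m,n$ with $mT_\rho=2\pi n$, where $T_\rho$ is the (fundamental) period of $\rho$ as a function of $\phi$. Indeed, after $m$ periods of $\rho$ the full state $(u,\dot u)$ returns to its initial value, and we additionally need the cyclic angle $\phi$ to have advanced by a multiple of $2\pi$.

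For the gravitational case the trajectory is $\rho=\tfrac{1}{p}\bigl(1+e\cos[b(\phi-\phi_0)]\bigr)$, bounded (non-singular, non-circular) for $0<|e|<1$. Its $\phi$-period is $T_\rho=2\pi/b$, so the closure condition becomes $m(2\pi/b)=2\pi n$, i.e. $b=m/n$; any rational $b=m/n$ furnishes valid integers. For the harmonic oscillator case $\rho^2=\tfrac{h}{b^2}+\eta\cos[2b(\phi-\phi_0)]$, bounded (non-singular, non-circular) for $0<\eta<h/b^2$, has $\phi$-period $T_\rho=\pi/b$, giving $m(\pi/b)=2\pi n$, i.e. $b=m/(2n)$, again solvable whenever $b\in\mathbb Q$. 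Circular orbits ($e=0$, respectively $\eta=0$) are trivially closed.

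There is no real obstacle here: with the orbital equations from Sec.~V in hand, the lemma is essentially the number-theoretic remark that commensurability of two angular frequencies follows from rationality of their ratio. The only mild care needed is in recording the ``$n$ loops'' picture noted in the Introduction: for a given rational $b=p/q$ in lowest terms, the smallest admissible $n$ (number of revolutions before closure) is $q$ in the gravitational case and $q$ or $q/\gcd(q,2)$ in the harmonic case, but the lemma only asserts existence of some such $n$, which is immediate.
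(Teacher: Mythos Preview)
Your proof is correct and follows essentially the same approach as the paper: both use the explicit orbit formulas from Sections V.A and V.B and observe that rationality of $b$ makes the $\phi$-period of $\rho$ commensurable with $2\pi$. Your version is simply more detailed---the paper's proof consists of two sentences asserting that closedness is ``clear'' from the formulas once $b$ is rational---while you spell out the monotonicity of $\Theta$, the explicit periods $2\pi/b$ and $\pi/b$, and the resulting Diophantine condition.
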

\begin{proof}
In the  case of the gravitational potential $\rho=\frac 1 p(1+e\cos[b(\phi-\phi_0)])$ and the bounded orbits are clearly closed if $b$ is rational. 
Similarly, in the case of the harmonic oscillator,  $\rho^2=\frac{h}{b^2}+\eta\cos[2b(\phi-\phi_0)]$ and all the bounded orbits are closed provided that $b$ is a rational number.
\end{proof}

\begin{figure}[t]
  \begin{center}
    \mbox{
      \subfigure[]{\resizebox{!}{6cm}{\includegraphics
   {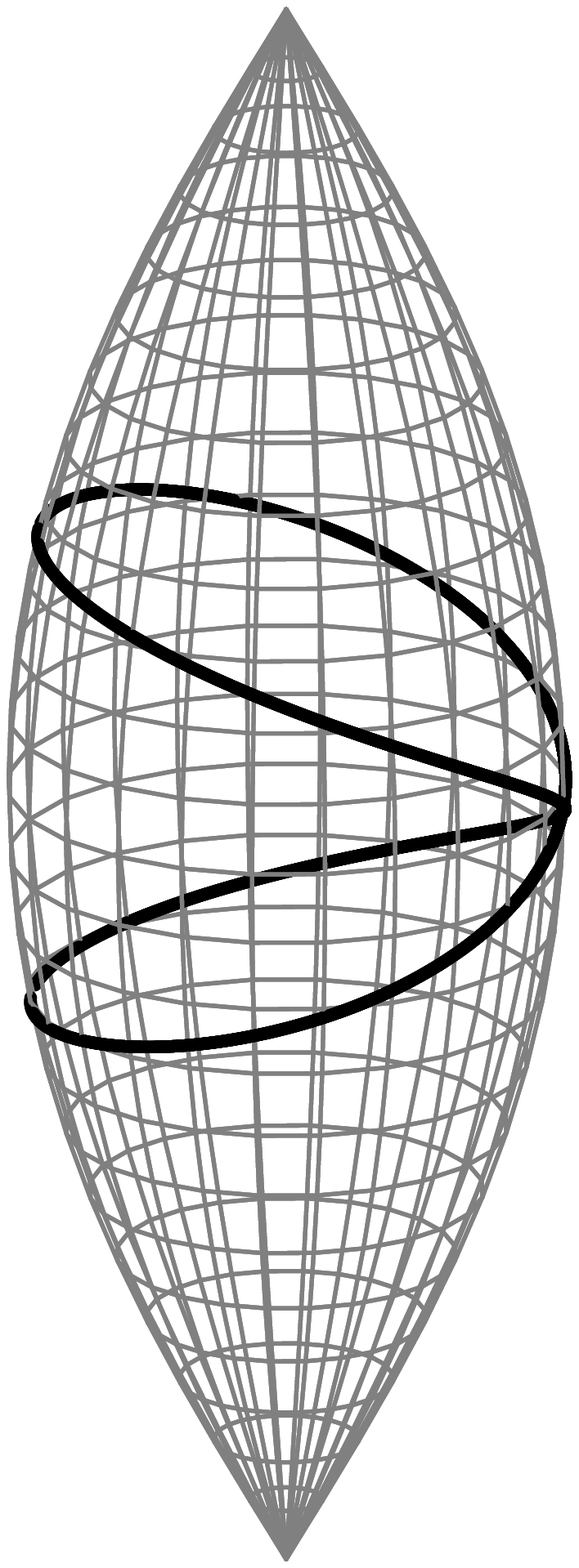}} 
       \label{posconstcurvflow}} \quad\quad
      \subfigure[]{\resizebox{!}{6cm}{\includegraphics{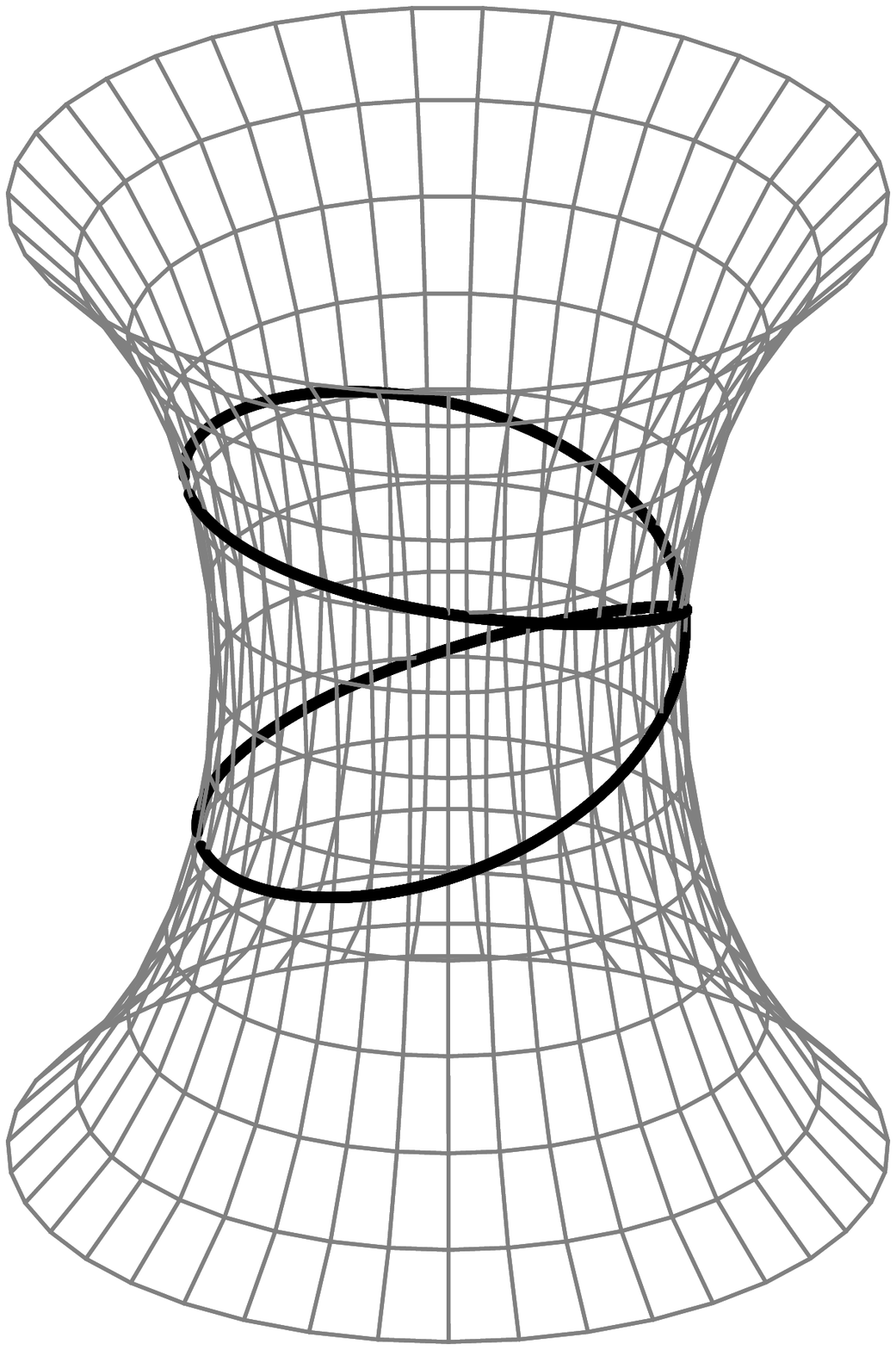}}
      \label{negconstcurvflow}}} 
    \caption{(a) A periodic orbit on a  constant $K=1$ surface with  $A=1/2$ and $B=1/8$. (b) A periodic orbit on a constant $K=-1$ surface with  $A=-1/2$ and $B=1/8$. }
  \end{center}
\end{figure}
A periodic orbit of the generalized gravitational potential  on a  surface with constant curvature $K=1$ (with  $A=1/2$ and $B=1/8$) is depicted in  Fig. \ref{posconstcurvflow}.  A periodic orbit of the generalized gravitational potential  on a  surface with constant curvature $K=-1$ (with  $A=1/2$ and $B=1/8$) is depicted in  Fig. \ref{negconstcurvflow}. Fig \ref{posconstcurvflow} and \ref{negconstcurvflow} depict examples of surfaces where all the orbits of the generalized potential are closed. In those examples $b=1/2$ and thus all the orbits wind around the surface twice. 

\section*{ \large \bf VI. Main Results}
In this section we obtain the main results of the paper. In order to do that we need some definitions and several lemmas.  
Let 
\[W(u)=\frac{l^2}{2mf(u)^2}+V(u)
\]
with $l=p_\phi$ denote the effective potential. Given the energy $E$ and the angular momentum $l$ the orbit can be calculated from 
\[
\phi(u)=\phi(u_0) +\int_{u_0}^u \frac{l}{mf(u)^2}\frac{du}{\sqrt{\frac{2}{m}[E-W(u)]}}.
\]

 To prove the main theorems  we first treat circular orbits of fixed radius $u_0$. Then we perform a first order (Lemma \ref{lemmaV1}) and a third order (Lemma \ref{lemmabeta})  study of the orbits which remain  close to the circular one $u_0$. But the existence of such orbits must first be guaranteed. In order to do that we first have to show that stable  periodic orbits exist for all the surfaces of revolutions and potentials we consider in Bertrand's theorem. 

\begin{lemma}
Consider   a central potential on a surface of revolution $S$  that has at least one bounded non-circular orbit. A necessary condition  to have all the bounded non-singular orbits closed is to have a minimum of the effective potential $W(u)$ (i.e. a stable circular orbit).
\label{lemmastart}\end{lemma}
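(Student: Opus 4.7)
My plan is to use energy conservation. From the Hamiltonian in Section~II, the radial motion satisfies
\[
\frac{m}{2}\dot u^2 + W(u) = E,
\]
so any trajectory with angular momentum $l=p_\phi$ and energy $E$ is confined to the set $\{u : W(u)\le E\}$, and its turning points are precisely the solutions of $W(u)=E$. The radial Euler--Lagrange equation reduces to $m\ddot u = -W'(u)$, so a circular orbit at $u=u_0$ requires $W'(u_0)=0$, and stability (boundedness of nearby radial motion) corresponds to $W''(u_0)>0$, i.e.\ a local minimum of $W$.

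I would then exploit the hypothesis that at least one bounded non-circular orbit exists. Being non-circular, $u(t)$ is not identically constant; being bounded and non-singular, $u(t)$ stays in a compact subset of the interior of $I$, bounded away from the endpoints and from any singularity of $V$. Hence $u(t)$ cannot be eventually monotone, and there must exist consecutive turning points $u_1<u_2$ with $W(u_1)=W(u_2)=E$ and $W(u)<E$ on the open interval $(u_1,u_2)$. By the extreme value theorem applied to the continuous function $W$ on the compact interval $[u_1,u_2]$, its minimum is attained, and since $W$ is strictly less than its common boundary value at any interior point, that minimum is achieved at some $u_0\in(u_1,u_2)$. Thus $W'(u_0)=0$ and $u_0$ is a local minimum of $W$, yielding a stable circular orbit of radius $u_0$, as required.

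The main (minor) obstacle is making the turning-point argument rigorous --- that is, ruling out the possibility that $u(t)$ merely asymptotes to some value without ever reaching it. Here one uses that, by the analyticity statement in Section~II, the solution is analytic on its maximal interval of existence, and the orbit is assumed non-singular, so $u(t)$ never reaches a metric or potential singularity nor the boundary of $I$. A bounded, non-constant analytic $u(t)$ must therefore have $\dot u$ vanish at some time, producing a genuine turning point, after which one may extract two consecutive zeros of $E-W(u)$. With this in place the remainder is a one-variable calculus exercise. I note in passing that the closedness hypothesis is not actually used in this particular lemma --- the mere existence of a bounded non-circular orbit already forces a minimum of $W$ --- but its role is to motivate the subsequent expansion of $W$ around $u_0$ carried out in the next lemmas.
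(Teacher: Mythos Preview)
Your turning-point argument is cleaner than the paper's case-by-case analysis, but there is a genuine gap in the step where you assert that ``a bounded, non-constant analytic $u(t)$ must therefore have $\dot u$ vanish at some time.'' This is false in general: if $W$ has a local \emph{maximum} at $u^*$, then an orbit with energy $E=W(u^*)$ starting at some $u_0<u^*$ with $\dot u(0)>0$ satisfies $u(t)\nearrow u^*$ monotonically as $t\to\infty$. This orbit is bounded, non-singular, non-circular, analytic, and $\dot u(t)>0$ for all $t$; no turning point ever occurs. Your earlier claim that ``$u(t)$ cannot be eventually monotone'' has the same defect: bounded and monotone are perfectly compatible, the limit being an unstable equilibrium of the radial equation.

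This is precisely where the closedness hypothesis enters, contrary to your final remark. The asymptotic orbit just described is bounded and non-singular but \emph{not} closed, so the assumption that all bounded non-singular orbits are closed rules it out. Equivalently: the bounded non-circular orbit furnished by the hypothesis is itself closed, hence $u(t)$ is periodic, hence non-constant periodic, hence $\dot u$ vanishes at the extrema of $u$, and you obtain two genuine turning points $u_1<u_2$. From there your extreme-value argument goes through verbatim and yields a local minimum of $W$ in $(u_1,u_2)$. The paper, by contrast, argues case by case (spherical, hyperboloidal, toroidal, paraboloidal), using the boundary behaviour of $l^2/(2mf^2)$ and invoking closedness explicitly to exclude asymptotic orbits; your route is shorter once the periodicity step is inserted.
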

\begin{proof}
We consider three possible cases:

\noindent
a) $S$ is a spherical   surface of revolution\\
b) $S$ is a hyperboloidal  surface of revolution \\
c) $S$ is a toroidal surface of revolution\\
d) $S$ is a paraboloidal surface of revolution\\

\noindent Case a) We distinguish several  cases. If $V$ is continuous in  $[c,d]$ it is bounded and  $W$ has a local minimum in $(c,d)$. This is because $\frac{l^2}{2mf(u)^2}\rightarrow \infty$ as $u\rightarrow c$, $u\rightarrow d$ and $V$  bounded in $[c,d]$ imply that $W(u)\rightarrow \infty$ as $u\rightarrow c$ and $u\rightarrow d$. Now consider the case  $V$ is not continuous in $u=c$ (but continuous at $u=d$). If  $W'(u)>0$  then all the orbits are collision orbits. Thus we must have $W'(u)=0$ at some point $u_0$. If it is a saddle then all the orbits are collisions. If it it is a local maximum then there must be a minimum point, since $W(u)\rightarrow \infty $ as $u\rightarrow d$. The case $V$ not continuous at $c$ is similar. Now assume  $V$ is not continuous in $u=c$ and $u=d$. If $W'(u)>0$ or $W'(u)<0$ in $(c,d)$ then all the orbits are collisions. Thus $W'(u)=0$ for some $u$ in $(c,d)$. If it is a saddle or a maximum then all the orbits are collisions. Hence it must be a minimum point. 

\vskip 0.2 cm
\noindent Case b) If $W'(u)> 0$ or $W'(u)< 0$ for every $u$ then there are no bounded non-singular orbits except, at most, the ones asymptotic to the boundary (that are not closed). Thus there must be  an $u^*\in I$ such that $W'(u^*)=0$. If at $u=u^* $ there is a local maximum or a saddle point  there are no bounded non-singular solutions besides the circular one (except at most bounded solutions asymptotic to the boundary of $S$). Thus $W(u^*)$ must be a local minimum.

\vskip 0.2 cm
\noindent Case c) The surface $S$ is compact and 
$W(c)=W(d)$
 Since $W(u)$ is a continuous function on $[c,d]$, differentiable  on $(c,d)$, it has a (local) maximum or a minimum at some $u^*\in(c,d)$.  
 Clearly $W'(u^*)=0$.
If $W$ has a local maximum then there are bounded orbits asymptotic to the periodic one and not all the bounded orbits are closed. Thus there must be a local minimum.  

\vskip 0.2 cm
\noindent Case d) If $W'(u)<0$ for all $u\in I$ all the solutions are either unbounded or singular. 
If $W'(u)>0$ and the potential $V(u)$ is singular at $u=c$ then all the solutions are collisions. 
On the other hand if the potential is smooth at $u=c$ then $W(u)\rightarrow \infty$ as $u\rightarrow c$, since  $\frac{l^2}{2mf(u)^2}\rightarrow \infty$ as $u\rightarrow c$. Hence if the potential is smooth $W'(u)$ cannot be positive for every $u\in I$.
Consequently there is a $u^*\in I$ such that $W'(u^*)=0$. If such point is a saddle point or a maximum  all the solutions are either unbounded or singular (except at most bounded orbits asymptotic to the boundary of $S$).   
\end{proof}

Consider a bounded motion between turning points $u_1$ and $u_2$ in the vicinity of a local minimum $u_0$  of the effective potential. Let $\Delta\phi(E)$ denote the advance in a complete journey from $u_2$ to $u_1$ and back to $u_1$  and let $W_0=W(u_0)$ be a local minimum value,   then we have the following

\begin{lemma}
Consider  a central potential on a surface of revolution $S$  and assume the effective potential $W$ has a minimum at $u_0$ and  yields closed orbits  then
\beq
\int_{u_1(W)}^{u_2(W)}\frac{ds}{f(s)^2}=\frac{2\sqrt{2m}}{l\beta}\sqrt{W-W_0}
\label{eqW}\eeq
where $\beta$ is a constant such that  $\beta=\frac{2\pi}{\Delta\phi}=\frac{p}{q}\neq 0$.
\end{lemma}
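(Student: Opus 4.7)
The plan is to adapt Tikochinsky's approach to Bertrand's theorem: first convert the closed-orbit hypothesis into the statement that the apsidal angle $\Delta\phi$ is independent of the energy, and then recognize the resulting identity as an Abel-type integral equation, which can be inverted directly by a Fubini-plus-beta-function computation. Throughout I write $W$ for the total energy of an orbit, borrowing the letter from the effective potential as the statement does.

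From the displayed orbit equation at the start of Sec.~VI, the angular advance in one full radial oscillation between the turning points $u_{1}(W)$ and $u_{2}(W)$ is
\[
\Delta\phi(W)=\frac{2l}{\sqrt{2m}}\int_{u_{1}(W)}^{u_{2}(W)}\frac{du}{f(u)^{2}\sqrt{W-W(u)}}.
\]
Since every bounded non-singular orbit closes, $\Delta\phi(W)$ is a rational multiple of $2\pi$ for every admissible energy; because $\Delta\phi$ depends continuously on $W$ and the rationals are totally disconnected, this multiple must be independent of $W$. Calling its constant value $2\pi/\beta$ with $\beta=p/q\neq 0$ produces the $\beta$ of the statement, and the previous identity becomes
\[
\int_{u_{1}(W)}^{u_{2}(W)}\frac{du}{f(u)^{2}\sqrt{W-W(u)}}=\frac{\pi\sqrt{2m}}{l\beta}.
\]

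To turn this into an Abel equation, split the integral at $u_{0}$ and, on the two pieces $[u_{1},u_{0}]$ and $[u_{0},u_{2}]$, change variables via $y=W(u)$, with $u_{-}(y)$ and $u_{+}(y)$ the two inverse branches of $W$. Setting
\[
J(W):=\int_{u_{1}(W)}^{u_{2}(W)}\frac{ds}{f(s)^{2}},
\]
Leibniz differentiation gives $J'(y)=u_{+}'(y)/f(u_{+}(y))^{2}-u_{-}'(y)/f(u_{-}(y))^{2}$, and the preceding identity becomes
\[
\int_{W_{0}}^{W}\frac{J'(y)}{\sqrt{W-y}}\,dy=\frac{\pi\sqrt{2m}}{l\beta}.
\]

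To invert, apply $\int_{W_{0}}^{W'}(\,\cdot\,)\,dW/\sqrt{W'-W}$ to both sides and exchange order of integration; the inner integral reduces to the beta-function value $\int_{y}^{W'}dW/\sqrt{(W'-W)(W-y)}=\pi$, and one obtains $\pi J(W')=2(\pi\sqrt{2m}/(l\beta))\sqrt{W'-W_{0}}$, which is precisely (\ref{eqW}). The basepoint $J(W_{0})=0$ is automatic because the turning points coalesce at $u_{0}$. The only delicate point is justifying the change of variables and the Abel inversion near the minimum: at a non-degenerate minimum one has $u_{\pm}(y)-u_{0}\sim\pm\sqrt{2(y-W_{0})/W''(u_{0})}$, so $u_{\pm}'(y)$ has the mild, integrable $1/\sqrt{y-W_{0}}$ singularity for which Abel's formula applies in the classical sense. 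I expect this to be the only nontrivial step; the remaining computations are routine.
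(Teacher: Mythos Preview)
Your proof is correct and follows essentially the same route as the paper: write the apsidal angle as an Abel-type integral in the energy variable, invoke continuity of $\Delta\phi$ to force it constant, and invert via the standard $\int_y^{W'} dW/\sqrt{(W'-W)(W-y)}=\pi$ identity after Fubini. The only cosmetic difference is the order of operations---the paper first inverts the Abel equation for general $\Delta\phi(E)$ and then specializes to constant $\Delta\phi$, whereas you impose constancy first---and you add an explicit remark on the integrable $1/\sqrt{y-W_0}$ singularity at the minimum, which the paper leaves implicit.
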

\begin{proof}
Since the orbit is symmetric about the direction of a turning point we have
\beq\begin{split}
\Delta\phi(E)&=2\int_{u_1}^{u_2} \frac{l}{mf(u)^2}\frac{du}{\sqrt{\frac{2}{m}[E-W(u)]}}\\
&=\sqrt{\frac{2}{m}}l\left[\int_E^{W_0}\frac{1}{f(u_1(W))^2}\frac{du_1(W)}{dW}\frac{dW}{\sqrt{E-W}}+\int_{W_0}^E\frac{1}{f(u_2(W))^2}\frac{du_2(W)}{dW}\frac{dW}{\sqrt{E-W}}\right]\\
&=\int_{W_0}^E \Gamma(W)\frac{dW}{\sqrt{E-W}}
\end{split}
\label{eqdeltaphi}\eeq
where
\[\begin{split}
\Gamma(W)&=\sqrt{\frac 2 m} l\left[\frac{1}{f(u_2(W))^2}\frac{du_2}{dW}-\frac{1}{f(u_1(W))^2}\frac{du_1}{dW}\right]\\
&=\sqrt{\frac 2 m} l\frac{d}{dW}\left[\int_a^{u_2(W)}\frac{ds}{f(s)^2}-\int_a^{u_1(W)}\frac{ds}{f(s)^2}\right].
\end{split}\]
Equation (\ref{eqdeltaphi}), considered as an integral equation for the unknown function $\Gamma(W)$ is a special case of Abel's equation (or Euler's hypergeometric transformation) and  can be solved for $\Gamma(W)$ in terms of $\Delta\phi(E)$ as follows.
Divide both sides by $\sqrt{\bar W-E}$ and integrate over $E$ between $W_0$ and $\bar W$
\[
\int_{W_0}^{\bar W}\frac{\Delta\phi}{\sqrt{\bar W-E}}~dE=\int_{W_0}^{\bar W}\int_{W_0}^{E}\frac{\Gamma(W)}{\sqrt{\bar W -E}\sqrt{E-W}}~dW~dE.
\]
A change in the order  of integration leads to 
\[
\int_{W_0}^{\bar W}\frac{\Delta\phi}{\sqrt{\bar W-E}}~dE=\int_{W_0}^{\bar W}\Gamma(W)dW\int_{W}^{\bar W}\frac{dE}{\sqrt{\bar W -E}\sqrt{E-W}}.
\]
The last integral is elementary. Its value is $\pi$. Let $W_0=W(u_0)$.
Since $u_1(W_0)=u_2(W_0)=u_0$ we have 
\beq
\int_{W_0}^{ W} \frac{\Delta \phi}{\sqrt{W-E}}~dE=\pi\int_{W_0}^{\bar W} \Gamma(W) dW=\pi l\sqrt{\frac 2 m} \int_{u_1(W)}^{u_2(W)} \frac{ds}{f(s)^2}.
\label{eqdelta}
\eeq
The previous equation is valid for any bounded motion. We now write it for  closed orbits.
The condition for an orbit to be closed is that $\Delta\phi(E)=q/p$ where $q$ and $p$ are integers.
If $\Delta\phi(E)/2\pi$ is a continuous function of $E$ it must be constant otherwise it would assume irrational values.
Since $\Delta\phi$, as a function of the energy is a constant, the integration in Eq. (\ref{eqdelta}) can be performed to obtain
\[
\int_{u_2(W)}^{u_1(W)}\frac{ds}{f(s)^2}=\frac{2\sqrt{2m}}{l\beta}\sqrt{W-W(u_0)}.
\] 
\end{proof}

\begin{lemma}
If in a central field on a surface of revolution $S$ all the orbits near a circular one are closed then the potential $V(u)$ satisfies the differential equation
\beq
\frac{V''(u_0)}{V'(u_0)}=\frac{1}{f'(u_0)f(u_0)}(\beta^2-3(f'(u_0))^2)+\frac{f''(u_0)}{f'(u_0)}.
\label{eqV1}
\eeq
\label{lemmaV1}
\end{lemma}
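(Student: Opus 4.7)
My plan is to derive the stated ODE by expanding the integral identity \eqref{eqW} to leading order in $\sqrt{W-W_0}$ and combining the resulting constraint with the defining equation of the circular orbit. The starting ingredient is the first‐order condition for a circular orbit at $u_0$: since $W'(u_0)=0$, one has
\[
-\frac{l^{2}f'(u_{0})}{m f(u_{0})^{3}}+V'(u_{0})=0,\qquad\text{hence}\qquad l^{2}=\frac{m f(u_{0})^{3}V'(u_{0})}{f'(u_{0})}.
\]
This ties the angular momentum of the circular orbit to $V'(u_0)$, and will be the tool for eliminating $l^{2}$ at the end of the proof.

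Next, I would examine both sides of \eqref{eqW} as $W\downarrow W_{0}$. Since $W$ has a non‐degenerate minimum at $u_{0}$, writing $u-u_{0}=\epsilon$ gives $W(u)-W_{0}=\tfrac{1}{2}W''(u_{0})\epsilon^{2}+O(\epsilon^{3})$, so the turning points $u_{1}(W)<u_{0}<u_{2}(W)$ satisfy
\[
u_{2}(W)-u_{0}\sim -(u_{1}(W)-u_{0})\sim \sqrt{\tfrac{2(W-W_{0})}{W''(u_{0})}}.
\]
Expanding the integrand $1/f(s)^{2}$ around $s=u_{0}$ the left‐hand side of \eqref{eqW} is, to leading order, $(u_{2}-u_{1})/f(u_{0})^{2}$, i.e.\ $\frac{2}{f(u_{0})^{2}}\sqrt{2(W-W_{0})/W''(u_{0})}$. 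Matching this with the right‐hand side $\frac{2\sqrt{2m}}{l\beta}\sqrt{W-W_{0}}$ and squaring yields the central first‐order condition
\[
W''(u_{0})=\frac{l^{2}\beta^{2}}{m f(u_{0})^{4}}.
\]

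Finally, I would expand $W''(u_{0})$ explicitly. Using $\bigl(1/f^{2}\bigr)''=-2f''/f^{3}+6(f')^{2}/f^{4}$ one obtains
\[
W''(u_{0})=V''(u_{0})+\frac{l^{2}}{m f(u_{0})^{4}}\Bigl(\frac{3(f'(u_{0}))^{2}}{1}-f(u_{0})f''(u_{0})\Bigr)\cdot\frac{1}{f(u_{0})}\cdot\text{(tidy up)},
\]
and equating with $l^{2}\beta^{2}/(mf(u_{0})^{4})$ gives
\[
V''(u_{0})=\frac{l^{2}}{m f(u_{0})^{4}}\bigl(\beta^{2}+f(u_{0})f''(u_{0})-3(f'(u_{0}))^{2}\bigr).
\]
Substituting $l^{2}=m f(u_{0})^{3}V'(u_{0})/f'(u_{0})$ from the circular‐orbit relation and dividing by $V'(u_{0})$ yields exactly \eqref{eqV1}.

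The calculation is essentially routine once the first non‐vanishing order of \eqref{eqW} is isolated. The only mildly delicate point is the asymptotic behaviour of the turning points near the minimum: one must check that the $W''(u_{0})>0$ hypothesis (guaranteed by Lemma~\ref{lemmastart}) makes the square‐root expansion valid, and that the next corrections to $u_{2}-u_{1}$ and to the integrand only affect higher orders in $\sqrt{W-W_{0}}$. The higher‐order terms which are discarded here will reappear in the third‐order analysis of Lemma~\ref{lemmabeta}, where they supply the second, stronger constraint on $V$.
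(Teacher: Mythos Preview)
Your argument is correct and follows essentially the same route as the paper: both proofs Taylor-expand $W$ about its minimum, identify the leading-order turning points $u_{1,2}-u_0\sim\pm\sqrt{2(W-W_0)/W''(u_0)}$, insert this into \eqref{eqW} to obtain $W''(u_0)=l^{2}\beta^{2}/(m f(u_0)^{4})$, and then eliminate $l^{2}$ via the circular-orbit relation $l^{2}=m f^{3}V'/f'$. The only cosmetic difference is that the paper writes $W''(u_0)=V''(u_0)+V'(u_0)\bigl[-f''/f'+3f'/f\bigr]$ directly (having already substituted $l^{2}$), whereas you first isolate $V''(u_0)$ and substitute $l^{2}$ afterwards.
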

\begin{proof}
We now Taylor expand the effective potential 
\[
W(u)=V(u)+\frac{l^2}{2mf(u)^2}
\]
around its minimum at $u_0$.
With the notation $W''(u_0)=W_0''$, $u_2(W)=u_0+x$ and $u_1(W)=u_0-y$ we have, to the first non-vanishing order,
\[W-W_0=\frac 1 2 x^2 W_0''+\ldots=\frac 1 2 y^2 W_0''+\ldots.\]
Hence $x=y$ and equation (\ref{eqW}) yields (to this order)
\beq
\left(\int_{u_1}^{u_2}\frac{ds}{f(s)^2}\right)^2=\left(\frac{2x}{f(u_0)^2}\right)^2=\frac{4m}{l^2\beta^2}x^2W''(u_0)
\label{eqnew}\eeq
 The minimum condition
 \[
 W_0'=W'(u_0)=V'(u_0)-\frac{l^2f'(u_0)}{mf^3(u_0)}=0
 \] 
 yields
 \beq
 l^2=\frac{mf^3(u_0)V'(u_0)}{f'(u_0)}.
 \label{eql}
 \eeq
 Substituting Eq. (\ref{eql}) in Eq. (\ref{eqnew}) and using
 \[W_0''=W(u_0)''=V''(u_0)+V'(u_0)\left[-\frac{f''(u_0)}{f'(u_0)}+3\frac{f'(u_0}{f(u_0)}\right]\]
 we obtain (\ref{eqV1}).
\end{proof}

We can now show that the gravitational potential and the harmonic oscillator potential on a surface of revolution $S$ are closed only on some very special surfaces, namely on certain surfaces of constant curvature. 

\begin{proposition}
The gravitational potential $V_1=a\Theta(u)$ gives closed orbits if and only if 
$-f''f+(f')^2=\beta^2$, where $\beta$ is a rational number. The harmonic oscillator potential $V_2=k\Theta(u)^{-2}$ gives closed orbits if and only if $-ff''+(f')^2=\beta^2/4$, where $\beta$ is a rational number.
\end{proposition}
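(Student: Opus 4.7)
The plan is to establish each biconditional by separately proving sufficiency and necessity.

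For sufficiency, both implications follow immediately from Lemma \ref{lemmakephar}. Assuming $-ff'' + (f')^2 = b^2$ with $b$ rational, Section V.A provides the explicit closed orbit $\rho = p^{-1}(1 + e\cos[b(\phi-\phi_0)])$ for $V_1$, whose full $\phi$-period is $\Delta\phi = 2\pi/b$, so $\beta := 2\pi/\Delta\phi = b$ and the relation reads $-ff'' + (f')^2 = \beta^2$. Similarly, Section V.B gives the closed orbit $\rho^2 = h/b^2 + \eta\cos[2b(\phi-\phi_0)]$ for $V_2$ with full period $\Delta\phi = \pi/b$, so $\beta = 2b$ and the relation reads $b^2 = \beta^2/4$, completing sufficiency in both cases.

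For necessity, suppose $V_i$ yields only closed bounded non-singular orbits. Lemma \ref{lemmastart} produces a stable circular orbit $u_0 = u_0(l)$ determined by $W'(u_0) = 0$, and the implicit function theorem implies $u_0$ sweeps out an open interval as the angular momentum $l$ is varied. Since $\beta = 2\pi/\Delta\phi$ depends continuously on $l$ but can only take rational values, $\beta$ is constant and rational on this interval. Lemma \ref{lemmaV1} then applies at each such $u_0$ and gives the pointwise identity
\[ \frac{V''(u_0)}{V'(u_0)} = \frac{\beta^2 - 3(f'(u_0))^2}{f(u_0)\,f'(u_0)} + \frac{f''(u_0)}{f'(u_0)}. \]
I would then substitute the explicit form of $V_i''/V_i'$ to obtain a constraint on $f$ which, holding on an open interval, extends by analyticity of $f$ to the entire domain.

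For $V_1 = a\Theta$ one has $V_1' = a/f^2$, hence $V_1''/V_1' = -2f'/f$; substituting and clearing denominators in the Lemma \ref{lemmaV1} identity collapses it directly to $(f')^2 - ff'' = \beta^2$, completing the gravitational case. For $V_2 = k\Theta^{-2}$ an analogous logarithmic-derivative computation using $\Theta' = 1/f^2$ yields $V_2''/V_2' = -2f'/f - 3/(f^2\Theta)$, and substitution gives only the weaker identity $(f')^2 - ff'' = \beta^2 + 3f'/(f\Theta)$. Setting $B(u) := (f')^2 - ff''$ and differentiating produces the separable ODE $B' = (\beta^2 - 4B)/(f^2\Theta)$, whose general solution is the one-parameter family $B = \beta^2/4 + C\,\Theta^{-4}$. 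The main obstacle is thus to force $C = 0$: the first-order data extracted from Lemma \ref{lemmaV1} alone leaves this family open, and I expect to close it by invoking the third-order expansion of the effective potential at the circular orbit (the forthcoming Lemma \ref{lemmabeta} announced in Section I), whose additional relation is incompatible with $C \neq 0$ and thereby pins down $B \equiv \beta^2/4$ identically, as required.
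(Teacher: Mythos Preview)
Your treatment of the gravitational case is correct and matches the paper's approach: the paper simply says ``substituting Eq.~(\ref{eqV}) in Eq.~(\ref{eqV1}) and simplifying we obtain $-f''f+(f')^2=\beta^2$,'' and then quotes Lemma~\ref{lemmakephar} for the converse, which is exactly what you do (with more detail).

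For the harmonic oscillator the paper's proof is equally terse: ``Similarly substituting (\ref{eqoscillator}) in Eq.~(\ref{eqV1}) and simplifying we obtain $-f''f+(f')^2=\beta^2/4$.'' You are right that a direct substitution yields only $(f')^2-ff''=\beta^2+3f'/(f\Theta)$, and your reduction to the family $B=\beta^2/4+C\,\Theta^{-4}$ is correct. However, your proposed closing step---invoking Lemma~\ref{lemmabeta} to force $C=0$---will not succeed: the third-order relation is automatically satisfied by every member of your family. Indeed, with $h=B=\beta^2/4+C\Theta^{-4}$ one has $h'=-4C\Theta^{-5}/f^2$, and using $3f'/(f\Theta)=B-\beta^2=-\tfrac{3}{4}\beta^2+C\Theta^{-4}$ a short computation gives
\[
\beta^4-5h\beta^2+4h^2+3ff'h'
= C\,\Theta^{-4}\Bigl[-3\beta^2+4C\Theta^{-4}-12\,\tfrac{f'}{f\Theta}\Bigr]
= C\,\Theta^{-4}\cdot 0 = 0,
\]
so Eq.~(\ref{eqbeta}) imposes no new constraint. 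This is not an accident: it is precisely the content of the last paragraph of the proof of Theorem~3, where the paper shows that $V_2=k/\Theta^2$ (with a suitably adjusted $\Theta$) satisfies both (\ref{eqV1}) and (\ref{eqbeta}) even when $h(u)$ is non-constant.

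Thus the perturbative Lemmas~\ref{lemmaV1} and~\ref{lemmabeta} alone cannot give the ``only if'' direction for $V_2$ as stated. The paper's one-line ``simplification'' tacitly uses the specific antiderivative of Proposition~\ref{prf}, i.e.\ the relation $f'/f=-b^2\Theta$, which already presupposes that $(f')^2-ff''$ is constant; granting that, one gets $4b^2=\beta^2$ immediately. Without that implicit assumption the harmonic-oscillator half of the biconditional is genuinely more delicate than either proof indicates, and your plan does not close the gap.
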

\begin{proof}
Substituting Eq. (\ref{eqV}) in Eq. (\ref{eqV1})  and simplifying we obtain
$-f''f+(f')^2=\beta^2$. The first part of the proof follows from Lemma \ref{lemmakephar}.
Similarly substituting (\ref{eqoscillator}) in Eq. (\ref{eqV1}) and simplifying we obtain
$-f''f+(f')^2=\beta^2/4$. The  proof follows from Lemma \ref{lemmakephar}.

\end{proof}

The following lemma determines the possible values of $\beta$
\begin{lemma}
If in a central field on a surface of revolution $S$ all the orbits near a circular one are closed then we obtain the following equation for $\beta$
\beq
\beta^4-5(-f''f+(f')^2)\beta^2-5f''(f')^2f +4(f'')^2f^2-3f'''f'(f)^2+4(f')^4=0.
\label{eqbeta}
\eeq
\label{lemmabeta}
\end{lemma}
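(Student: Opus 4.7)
The strategy is to extend the Tikochinsky-style expansion of Lemma~\ref{lemmaV1} by one more order in $\epsilon=\sqrt{W-W_0}$. As in that proof, set $u_2(W)=u_0+x$ and $u_1(W)=u_0-y$ and Taylor-expand $W-W_0$ about $u_0$ to order four in the displacement. Perturbatively inverting gives $x,y$ as power series in $\epsilon$ through $O(\epsilon^3)$. The parity $y=x|_{W^{(2j+1)}\to -W^{(2j+1)}}$ forces the integral on the left-hand side of (\ref{eqW}) to contain only odd powers of $\epsilon$, say $C_1\epsilon+C_3\epsilon^3+O(\epsilon^5)$. Matching to the right-hand side $(2\sqrt{2m}/(l\beta))\epsilon$, the coefficient $C_1$ reproduces (\ref{eqV1}); the next matching $C_3=0$, after Taylor-expanding $1/f(s)^2$ to second order in $s-u_0$, becomes
\[
5A_0(W''')^2-3A_0W''W^{(4)}-12A_1W''W'''+24A_2(W'')^2=0,
\]
where $A_k=(k!)^{-1}(d^k/du^k)[1/f(u)^2]|_{u=u_0}$.

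To eliminate the potential derivatives hiding inside $W'''$ and $W^{(4)}$, regard (\ref{eqV1}) as an identity in $u_0$, justified by the rationality (hence local constancy) of $\beta$ together with Lemma~\ref{lemmastart}'s guarantee of a continuum of stable circular orbits. Then (\ref{eqV1}) becomes the ODE $V''/V'=\mathcal{P}$ with $\mathcal{P}(u)=[\beta^2+ff''-3(f')^2]/(ff')$, from which $V'''/V'=\mathcal{P}^2+\mathcal{P}'$ and $V''''/V'=\mathcal{P}^3+3\mathcal{P}\mathcal{P}'+\mathcal{P}''$. Combining with the minimum condition $V'=l^2f'/(mf^3)$ gives $W''=l^2\beta^2/(mf^4)$, an explicit formula for $W'''$, and one for $W^{(4)}$ in which the $f''''$-contributions arising from $\mathcal{P}''$ and from the purely-kinetic part of $W^{(4)}$ cancel.

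Substituting these into the displayed identity produces, after simplification, a cubic polynomial in $\beta^2$. Its $\beta^0$ coefficient vanishes identically---reflecting the fact that (\ref{eqV1}) is already imposed---so the cubic factors as $\beta^2$ times a quadratic. Since $\beta\ne 0$, the quadratic must vanish, and collecting its coefficients gives precisely (\ref{eqbeta}).

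The chief obstacle is the algebraic bookkeeping. Two structural cancellations must be verified during the simplification: the disappearance of $f''''$ from $W^{(4)}$ and the vanishing of the $\beta^0$ coefficient of the resulting cubic. Both are essential for the final form to depend only on derivatives of $f$ up to $f'''$ and to match (\ref{eqbeta}). To guard against sign errors I would check the Euclidean case $f(u)=u$, where (\ref{eqbeta}) reduces to $(\beta^2-1)(\beta^2-4)=0$, recovering Kepler ($\beta=1$) and the harmonic oscillator ($\beta=2$).
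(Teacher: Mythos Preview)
Your proposal is correct and follows essentially the same approach as the paper: a Tikochinsky-style higher-order expansion of (\ref{eqW}), followed by elimination of $V'',V''',V''''$ via (\ref{eqV1}) and (\ref{eql}). The only difference is cosmetic bookkeeping---the paper squares (\ref{eqW}) and matches powers of $x$ after writing $y=x(1+ax+a^2x^2+\ldots)$ with $a=W_0'''/(3W_0'')$, whereas you expand linearly in $\epsilon=\sqrt{W-W_0}$ and impose $C_3=0$; the resulting algebra and the final substitution of the explicit $W'',W''',W^{(4)}$ are the same.
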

\begin{proof}
We now   Taylor expand the effective potential $V(u)$ around its minimum $u_0$ up to order four
\[
W-W_0=\frac 1 2 x^2 W_0''+\frac 1 6 x^3 W_0'''+\frac {1}{24} x^4 W_0''''+\ldots=
\frac 1 2y^2 W_0''-\frac 1 6 y^3W_0'''+\frac{1}{24} y^4 W_0''''+\ldots, 
\]
and substituting the expansion
$y=x(1+ax+bx^2+\ldots),$ we find $y=x(1+ax+a^2x^2+\ldots)$
with $a=W_0'''/(3W_0'')$.

When this expansion for $y$ is inserted into Eq. (\ref{eqW}) and powers of $x$ up to the fourth order are kept, we obtain,
\[\begin{split}
\left(\int_{u_1}^{u_2}\frac{ds}{f(s)^2}\right)^2&=\frac{x^2}{f(u_0)^4}\left[4 +4ax+\left(5a^2+\frac{8af'(u_0)}{f(u_0)}+\frac{8(f'(u_0)^2}{f(u_0)^2}-\frac 8 3 \frac{f''(u_0)}{f(u_0)}\right)x^2\right]\\
&=\frac{4m}{l^2\beta^2}x^2\left[ W''(u_0)+\frac 1 3 xW'''(u_0)+\frac{1}{12}x^2W''''(u_0)\right].
\end{split}\]
Hence comparing equal powers of $x$
\begin{align}
&\frac{1}{f(u_0)^4}=\left(\frac{m}{l^2\beta^2}\right) W''(u_0)\\
&\frac{a}{f(u_0)^4}=\frac 1 3 \left(\frac{m}{l^2\beta^2}\right) W'''(u_0)\\
&\frac{1}{f(u_0)^4}\left(5a^2+\frac{8af'(u_0)}{f(u_0)}+\frac{8(f'(u_0)^2}{f(u_0)^2}-\frac 8 3 \frac{f''(u_0)}{f(u_0)}\right)=\frac 1 3 
\left(\frac{m}{l^2\beta^2}\right) W''''(u_0)\label{eqW0''''}
\end{align}
The first two equations give Eq. (\ref{eqW}). The new information is contained in the third equation.
Simplifying the expression for the derivatives with the aid of Eqs. (\ref{eql})
and (\ref{eqV1}) we obtain
\begin{align}
&W''(u_0)=\frac{V'(u_0)}{f'(u_0)f(u_0)}\beta^2\label{eqW''}\\
&W'''(u_0)=V'(u_0)\left[\frac{1}{f(u_0)^2}\left(\frac{\beta^2}{(f'(u_0))^2}-7\right)+\frac{f''(u_0)}{(f'(u_0))^2f(u_0)}   \right]\beta^2 \label{eqW'''}\\
&W''''(u_0)=\frac{V'(u_0)}{f'f^3}\left[\frac{\beta^4}{(f')^2}-12\beta^2-\frac{f^2(f'')^2}{(f')^2}-20 f''f +2\frac{f'''f^2}{(f')^2}+47(f')^2\right]\beta^2\label{eqW''''}
\end{align}

thus the quantity $a$ is given by 
\[
a=\frac 1 3 \left [ \frac{f'(u_0)}{f(u_0)}\left(\frac{\beta^2}{(f'(u_0))^2}-7 \right)+\frac{f''(u_0)}{f'(u_0)}\right]
\]
Inserting the last expression and (\ref{eqW''''}) into Eq. (\ref{eqW0''''}) yields (\ref{eqbeta}).
\end{proof}

We can now prove Bertrand's theorem for surfaces of constant curvature.

\begin{theorem}[Bertrand's Theorem for Surfaces of Constant Curvature]
Consider  an analytic central field on a surface of revolution $S$ with constant Gaussian curvature that has at least one bounded non-circular orbit. Assume the effective potential $W(u)$ has a local minimum. Then all the bounded (non-singular) orbits are closed if and only if $-ff''+(f')^2=\beta^2$ in which case  the potential energy takes the form $V_1=a\Theta(u)$ or $-ff''+(f')^2=\beta^2/4$  in which case  $V_2=\frac{k}{\Theta^2(u)}$.
\label{thbertrand}
\end{theorem}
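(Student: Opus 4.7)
The plan is to specialize the two universal constraints from Lemmas \ref{lemmaV1} and \ref{lemmabeta} to the constant-curvature setting, in which Proposition \ref{prf0} tells us that $-ff''+(f')^2 = b^2$ for some constant $b$. First the \emph{if} direction: when $-ff''+(f')^2 = \beta^2$ with $\beta$ rational, Lemma \ref{lemmakephar} together with the explicit orbit formula of Sec.\ V.A shows that $V_1 = a\Theta(u)$ produces closed trajectories, and the case $-ff''+(f')^2 = \beta^2/4$ for $V_2 = k\Theta(u)^{-2}$ is handled identically via Sec.\ V.B.

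For the converse, assume all bounded non-singular orbits are closed. Lemma \ref{lemmastart} produces a local minimum $u_0$ of $W$; varying the angular momentum $l$ makes $u_0$ sweep through an open subinterval of $I$ on which Lemmas \ref{lemmaV1} and \ref{lemmabeta} hold pointwise. Since $K$ is constant we have $f'' = -Kf$ and $f''' = -Kf'$, so $-ff''+(f')^2 = Kf^2 + (f')^2 = b^2$ is a constant. Substituting these relations into the equation of Lemma \ref{lemmabeta} and repeatedly using $Kf^2+(f')^2 = b^2$, all the $f$-dependent pieces collapse to $4b^4$, yielding
\[\beta^4 - 5b^2\beta^2 + 4b^4 = 0,\]
a quadratic in $\beta^2$ with roots $\beta^2 = b^2$ and $\beta^2 = 4b^2$. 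Because all bounded orbits are closed, $\beta = 2\pi/\Delta\phi$ is a fixed rational, so exactly one of the two alternatives holds.

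Inserting each alternative into Lemma \ref{lemmaV1} converts it into a first-order ODE for $V$. When $\beta^2 = b^2$, using $f'' = ((f')^2 - b^2)/f$, the right-hand side collapses to $V''/V' = -2f'/f$, whence $V'(u) = C/f(u)^2$ and $V(u) = a\Theta(u) + D$, i.e.\ $V = V_1$ modulo an inessential additive constant. When $\beta^2 = 4b^2$ the same substitution yields $V''/V' = -3f''/f' + f'/f$; using the identity $f'/f = -b^2\Theta$ of Proposition \ref{prf}, a direct check confirms that $V_2 = k\Theta^{-2}$ satisfies this ODE, and its general solution is $V = k\Theta^{-2} + D$, giving $V = V_2$ up to constants.

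The main technical point is that Lemmas \ref{lemmaV1} and \ref{lemmabeta} are derived at a single circular-orbit radius $u_0$, yet we need them as genuine ODEs on an open interval. The resolution is that the presence of bounded non-circular orbits, combined with the minimum condition $l^2 = m f^3(u_0) V'(u_0)/f'(u_0)$, makes $u_0$ a smooth function of $l$ covering an open subinterval of $I$; analyticity of $V$ then propagates the identity to the whole domain.
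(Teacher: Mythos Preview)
Your proof is correct and follows essentially the same route as the paper's: reduce Eq.\ (\ref{eqbeta}) in the constant-curvature setting to a quadratic in $\beta^2$, feed each root back into Eq.\ (\ref{eqV1}), and integrate. Your presentation is slightly cleaner in that you observe directly that $-ff''+(f')^2=Kf^2+(f')^2$ is a first integral of $f''=-Kf$ (so $b^2$ is constant) and obtain the single equation $\beta^4-5b^2\beta^2+4b^4=0$, whereas the paper splits into the cases $f=Cu+D$ and $f=Ae^{i\sqrt{K}u}+Be^{-i\sqrt{K}u}$ and invokes Proposition \ref{prf0} to identify $b^2=C^2$ or $b^2=4KAB$; the content is identical. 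You also make explicit the point that Eqs.\ (\ref{eqV1}) and (\ref{eqbeta}) are derived at a single $u_0$ and must be extended to an open interval by varying $l$, which the paper uses tacitly when it says ``solving the previous differential equation.''
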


\begin{proof}
By Lemma \ref{lemmastart} the hypothesis of  Lemma \ref{lemmaV1} and \ref{lemmabeta} are satisfied. 

Since the curvature is constant then $f''=-Kf$ and either $f(u)=Cu+D$ or $f(u)=Ae^{i\sqrt{K}u}+Be^{-i\sqrt{K}u}$.
In the first case from Eq. (\ref{eqbeta}) it follows that 
$\beta^4-5C^2\beta^2+4C^4=0$
and thus either $\beta^2=C^2$ or $\beta^2=4C^2$.
In the second case $\beta^4-20\beta^2KAB+64(KAB)^2=0$
and thus either $\beta^2=4KAB$ or $\beta^2=16KAB$

If $\beta^2=C^2$ or $\beta^2=4KAB$ then by Proposition \ref{prf0} $f(u)$ verifies the equation $-ff''+(f')^2=\beta^2$.
Using Lemma \ref{lemmaV1}, i.e. substituting $-ff''+(f')^2=\beta^2$ into Eq. (\ref{eqV1})
yields
\[
\frac{V''(u)}{V'(u)}=-2\frac{f'(u)}{f(u)}
\]
and solving the previous differential equation we obtain $V=V_1=a\Theta(u)$,
where $\Theta(u)$ is a primitive of $1/f(u)^2$.

On the other hand if $\beta^2=4C^2$ or $\beta^2=16KAB$ then by Proposition \ref{prf0} $f(u)$ verifies the equation $-ff''+(f')^2=\beta^2/4$.
Using Lemma \ref{lemmaV1}, i.e. substituting $-ff''+(f')^2=\beta^2/4$ into Eq. (\ref{eqV1})
yields
\beq
\frac{V''(u)}{V'(u)}=\frac{-3f''(u)f(u)+f'(u)^2}{f(u)f'(u)}.
\label{eqV2}
\eeq
The general solution of the previous equation is of the form $V_2(u)=\frac{k}{\Theta(u)^2}+\mbox{constant}$. To verify it we substitute $V_2$ into Eq. (\ref{eqV2}). We obtain
\[
\frac{6k(f'(u)+f(u)\Theta(u)(-f(u)f''(u)+(f'(u))^2))}{\Theta^4(u)f^4(u)f'(u)}=\frac{6k\left(f'(u)+\frac{\beta^2}{4}f(u)\Theta(u)\right)}{\Theta^4(u)f^4(u)f'(u)}=0
\]
where the last equality follows from Proposition \ref{prf} with $b^2=\beta^2/4$.

To conclude the proof it only remains to check that $V_1$ and $V_2$ do in fact lead to closed orbits. This follows immediately from Lemma \ref{lemmakephar}.
\end{proof}

\begin{remark}
Note that in the statement of Theorem \ref{thbertrand} we added the hypothesis that the central field on the surface $S$ has to have at least one non-circular periodic orbit. This is because there are no bounded orbits near the circular one and therefore the proof breaks down. However there are cases where this  situation arises. For example this condition arises when  one considers the pseudosphere (i.e. a surface of revolution with $f(u)=e^u$) and the gravitational potential  $V_1=a\Theta(u)$.
\end{remark}

We can also show a little more in the case of a general surface of revolution

\begin{theorem}
Consider  an analytic central field on a surface of revolution $S$  that has at least one bounded non-circular orbit. Then there are at most two analytic central potentials on $S$ for which  all the bounded non-singular orbits are closed. There are exactly two (i.e. $V_1=a\Theta(u)$ and  $V_2=\frac{k}{\Theta^2(u)}$) if and only if  $h(u)=-f''f+(f')^2\equiv\mbox{constant}$. There is at most one if $h(u)$ is not identically constant and (\ref{eqbeta}) is verified. In this case the potential  is $V_2=\frac{k}{\Theta^2(u)}$.  
 \end{theorem}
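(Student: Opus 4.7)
I would begin by extending the arguments in Lemmas~\ref{lemmaV1} and~\ref{lemmabeta}: if $V$ makes every bounded non-singular orbit closed, then at each stable circular radius both (\ref{eqV1}) and (\ref{eqbeta}) hold with some rational $\beta=2\pi/\Delta\phi$, and the continuity of $\beta$ in the energy and the angular momentum, together with its $\mathbb{Q}$-valuedness, forces this rational to be the same constant throughout. Since $u_0$ ranges over an open subset of the domain as $l$ varies, both identities hold on an open interval in $u$, hence by analyticity everywhere. Moreover (\ref{eqV1}) is a first-order linear ODE for $\ln V'$ at each fixed $\beta$, so $V$ is determined by the choice of $\beta$ up to the trivial additive and multiplicative constants. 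The decisive algebraic observation, which follows from a direct check with $h=(f')^2-ff''$ and $h'=f'f''-ff'''$, is the factored form
\[
(\beta^2-h)(\beta^2-4h)=-3ff'h',
\]
exhibiting (\ref{eqbeta}) as a quadratic in $\beta^2$ with at most two roots; together with the uniqueness from (\ref{eqV1}) this already gives the ``at most two potentials'' statement.

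If $h\equiv\mathrm{const}$ then $h'=0$, the factored form collapses to $(\beta^2-h)(\beta^2-4h)=0$, the two constant roots are $\beta^2=h$ and $\beta^2=4h$, and the corresponding potentials (by the concluding computation in the proof of Theorem~\ref{thbertrand}) are precisely $V_1=a\Theta$ and $V_2=k/\Theta^2$; both yield closed orbits by Lemma~\ref{lemmakephar}. Conversely, if two distinct constant roots exist, then by Vieta's formula their sum is $5h$, forcing $h$ to be constant. This establishes the ``exactly two $\Leftrightarrow h$ constant'' assertion.

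Suppose now that $h$ is not constant and that (\ref{eqbeta}) admits a constant root $c=\beta^2$. By Vieta the other root $5h-c$ is non-constant, so at most one potential can occur. The potential $V_1=a\Theta$ is ruled out because its substitution in (\ref{eqV1}) forces $\beta^2\equiv h$, contradicting $h\not\equiv c$. A direct substitution of $V_2=k/\Theta^2$ into (\ref{eqV1}) shows that $V_2$ satisfies (\ref{eqV1}) with the function $s(u):=h-3f'/(f\Theta)$ in the role of $\beta^2$. A short computation using $(c-h)(c-4h)=-3ff'h'$ and the definition of $h$ then yields the identity
\[
\left(\frac{3f'}{f(h-c)}\right)'=\frac{1}{f^2}.
\]
Consequently $3f'/[f(h-c)]$ is itself an antiderivative of $1/f^2$; selecting $\Theta$ to be precisely this antiderivative gives $s\equiv c$, so for this choice of $\Theta$ the potential $V_2=k/\Theta^2$ solves (\ref{eqV1}) with the constant $\beta^2=c$. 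Uniqueness of the first-order ODE (\ref{eqV1}) then forces any admissible $V$ to coincide with $V_2$ up to additive and multiplicative constants.

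The main technical obstacle is precisely this last identification: turning the rather opaque equation (\ref{eqbeta}), with constant root $c$, into the transparent identity $(3f'/[f(h-c)])'=1/f^2$, which in turn singles out the correct antiderivative making $V_2=k/\Theta^2$ a constant-$\beta$ solution of (\ref{eqV1}). Once this identity is verified, the uniqueness theory for first-order linear ODEs closes the argument.
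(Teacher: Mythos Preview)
Your proof is correct and follows essentially the same route as the paper's: both reduce the problem to the quadratic $\beta^4-5h\beta^2+4h^2+3ff'h'=0$, use Vieta to obtain the ``exactly two $\Leftrightarrow h$ constant'' equivalence, and in the non-constant case identify the surviving potential as $V_2=k/\Theta^2$ via the key relation $\Theta=3f'/[f(h-\beta^2)]$. Your factored form $(\beta^2-h)(\beta^2-4h)=-3ff'h'$ and your forward verification that $(3f'/[f(h-c)])'=1/f^2$ follows from this are a slightly cleaner packaging of exactly the computation the paper runs in reverse (assume $V_2$ solves (\ref{eqV1}), solve for $\Theta$, differentiate, and recover (\ref{eqbeta})), and your explicit exclusion of $V_1$ when $h$ is non-constant is a small addition the paper leaves implicit.
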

\begin{proof}
By Lemma \ref{lemmastart} the hypothesis of  Lemma \ref{lemmaV1} and \ref{lemmabeta} are satisfied. 

Equation (\ref{eqbeta}) can also be written as
\[
\beta^4-5(-f''f+(f')^2)\beta^2+4(-f''f+(f')^2)^2+3ff'(-f'''f+f'f'')=0.
\]
Substituting  $h(u)=-f''(u)f(u)+(f'(u))^2$ in the previous equation yields
\beq
\beta^4-5h(u)\beta^2+4h(u)^2+3f(u)f'(u)h'(u)=0.
\label{eqh}\eeq

Let $z=\beta^2$ then Eq. (\ref{eqh}) is a quadratic equation in $z$. Let $z_1$ and $z_2$ be the solutions of such equations. Assume $z_1$ and $z_2$ are constant.  Then, since $z_1+z_2=5h(u)$, $h(u)$ must be constant. On the other hand if $h(u)$ is constant $z_1$ and $z_2$ are constant.  This shows that Eq. (\ref{eqh}) has exactly two solution if and only if $h(u)$ is constant. 
From Proposition \ref{prf0} it follows that the surface of revolution $S$ has constant Gaussian curvature. Finally, from  Theorem \ref{thbertrand} it  follows that the two potentials are   $V_1=a\Theta(u)$ and  $V_2=\frac{k}{\Theta^2(u)}$.

Note that equation (\ref{eqV1}) is a first order linear differential equation of the form
\beq
y'(u)+\alpha(u)y=0
\label{eqy}
\eeq
where $y(u)=V(u)$ and $\alpha(u)=\frac{1}{f'f}(\beta^2-3(f')^2)+\frac{f''}{f'}$.
The general solution is of the form $y(u)=Ce^{A(u)}$ where $A'(u)=a(u)$. The expression
$\frac{d}{du}\left(\frac{k}{\Theta^2}\right)=\frac{-2k\Theta'}{\Theta^3}$ (where $\Theta(u)$ is an antiderivative of $1/f^2(u)$)gives the general solution of Eq. (\ref{eqy}) provided  $h(u)$ is not identically equal to $\beta$. In fact let $Ce^{-A(u)}=-2k/(f^2\Theta^3)$ then $A(u)=\ln\left(-\frac{C}{2k}f^2\Theta^3\right)$.
Differentiating $A(u)$, using that $\Theta'(u)=1/f^2(u)$ and simplifying we obtain
\[
A'(u)=\frac{2f'(u)}{f(u)}+\frac{3}{f^2(u)\Theta(u)}=\alpha(u)=\frac{1}{f'(u)f(u)}(\beta^2-3(f'(u))^2)+\frac{f''(u)}{f'(u)}
\]
and solving for $\Theta$ yields
\[
\Theta(u)=\frac{3f'(u)}{f(u)(-\beta^2+(f'(u))^2-f''(u)f(u))}.
\]
Therefore differentiating the expression above, substituting the result in the  equation $\Theta'(u)=1/f^2(u)$ and simplifying we obtain Eq. (\ref{eqbeta}).
Thus, if $f(u)$ satisfies Eq. (\ref{eqbeta}) and $h(u)$ is not identically equal to $\beta$, $y(u)=-\frac{2k\Theta'}{\Theta} $ is a general solution of Eq. (\ref{eqy}) and the corresponding potential is $V_2(u)=\frac{k}{\Theta^2(u)}$.
\end{proof}

\section*{\large\bf Acknowledgments}
The author acknowledges with gratitude useful discussions pertinent to the present research with Alain Albouy, Ray McLenaghan and  Cristina Stoica and thanks Ernesto P\'erez-Chavela for bringing to his attention the problem of the motion of a particle on a sphere. 
The research was supported in part by a Wilfried Laurier start-up grant.
\section*{\large\bf References}

\parindent 0 pt
\footnotesize

\r{Albouy} A. Albouy, ``Lectures on the two-body problem,"  
in {\it Classical and Celestial Mechanics: The Recife Lectures}, edited by H. Cabral and F. Diacu, (Princeton University Press, Princeton, NJ, 2002).

\r{Appell} P. Appell, ``Sur les lois de forces centrales faisant d\'ecrire \'a leur point d'application une conique quelles que soient les conditions initiales," Am. J. Math. {\bf 13}, 153-158 (1891).

\r{Arnold} V.I. Arnol'd, {\it Mathematical Methods of Classical Mechanics}, (Springer-Verlag, New York, 1978)

\r{Borisov} A.V. Borisov, I.S. Mamaev, `` Superintegrable Systems on a Sphere,"  Reg. \& Chaot. Dyn. {\bf 10}, 257-266   (2005).

\r{Carinena} J.F. Cari\~nena, M.F. Ranada  and M. Santander, ``Central Potentials on Spaces of Constant Curvature: The Kepler Problem on the Two-Dimensional sphere $S^2$ and the hyperbolic plane $H^2$," J. Math. Phys. {\bf 46}, 052702-1 (2005). 

\r{Goldstein}
H. Goldstein, {\it Classical Mechanics}, 2nd ed. (Addison-Wesley, Reading, MA, 1980).

\r{Killing} W. Killing,``Die mechanik in den nicht-Euklidischen raumformen," J. Reine Angew. Math. {\bf 98}, 1-48 (1885).

\r{Kozlov}
V.V Kozlov, A.O.  Harin,  ``Kepler's problem in constant curvature spaces"
Cel. Mech Dyn. Astr., {\bf 54}, 393-399 (1992).

\r{Liebmann1902}
H. Liebmann, ``Die Kegelschnitte und die Planetenbewegung im nichteuklidischen Raum" Berichte der K\"oniglich S\"achsischen Gesellschaft der Wissenschaft, Math. Phys. Klasse, {\bf 54}, 393 (1902).

\r{Liebmann}
H. Liebmann,``\"Uber die Zentralbewegung in der Nichteuklidische Geometrie," Leipzig Ber. {\bf 55}, 146-153  (1903).

\r{Lipschitz}
R. Lipschitz, ``Extension of the planet-problem to a space of n dimensions and constant integral curvature," The Quaterly Journal of pure and applied mathematics, {\bf 12}, 349-370 (1873).

\r{Lobachevski}
N.I. Lobachevskij, in {\it Collected Works} (GITTL, Moscow, 1949), Vol. 2, p. 159.

\r{Misner}
C.W. Misner, ``Mixmaster Universe,"  Phys. Rev. Lett. {\bf 22}, 1071-1074 (1969).

\r{Serret}
P. Serret,  {\it Th\'eorie nouvelle g\'eom\'etrique et m\'ecanique des lignes a double courbure}, (Librave de Mallet-Bachelier: Paris, 1860).

\r{Shchepetilov}
A.V. Shchepetilov, ``Comment on ``Central potentials on spaces of constant curvature: The Kepler problem on the two-dimensional sphere S2 and the hyperbolic plane H2" [J. Math. Phys. 46, 052702 (2005)]," J. Math. Phys.{\bf  46}, 114101  (2005).

\r{Schering}
E. Schering, Nachr. K\"onigl. Ges. Wiss. G\"ottingen {\bf 15}, 311 (1870).

\r{Tikochinsky}
Y. Tikochinsky,  ``A simplified proof of Bertrand's theorem," Am. J. Phys.
{\bf 56}, 1073-1075 (1988).

\r{Whittaker}
E.T. Whittaker, {\it A Treatise on the Analytical Dynamics of Particles and Rigid Bodies}, 4th ed. (Cambridge University Press, Cambridge, 1937).


\end{document}